\numberwithin{equation}{section}	
\theoremstyle{plain}
\newtheorem{Theorem}{Theorem}[section]
\newtheorem{Proposition}[Theorem]{Proposition}
\newtheorem{Lemma}[Theorem]{Lemma}
\theoremstyle{remark}
\newtheorem{Remark}[Theorem]{Remark}
\newtheorem{OpenP}[Theorem]{Open Problem}
\theoremstyle{definition}
\newtheorem{Definition}[Theorem]{Definition}
\newtheorem{para}[Theorem]{$\circ$}
\newcommand{\redx}[1]{#1}
\newcommand{\redxx}[1]{}
\newcommand \bluexx [1]{}
\newcommand \yellowxx [1]{}
\newcommand\N{\mathbb{N}}
\newcommand\Z{\mathbb{Z}}
\newcommand\R{\mathbb{R}}
\newcommand\ii{i} 
\newcommand\jj{j} 
\newcommand\nn{n} 
\newcommand{\nnn}{m}
\newcommand{\catfont}{\mathsf} 
\newcommand \agg  {$\pi$-module}
\newcommand \anagg  {a $\pi$-module}
\newcommand \Anagg  {A $\pi$-module}
\newcommand{\aagg}{{\catfont{Agg}}} 
\newcommand{\aaagg}{\agg} 
\DeclareMathOperator{\Aut}{Aut}	
\newcommand\BB[1]{B_{#1}}	
\newcommand \C {{L_n}}  
\def \d {\partial} 
\def \E {\mathcal{E}}
\newcommand \F {\mathcal{F}}
\newcommand{\UU}{{\mathcal U}}      
\newcommand{\TOP}{ \mathrm{TOP}}   
\newcommand{\FFa}{{\mathcal F}^a}   
\newcommand{\Grp}{{\mathsf{Grp}}}
\newcommand \G {\mathcal{G}}
\def \H {\mathcal{H}}
\newcommand\id{\mathrm{id}} 
\newcommand\inv{^{-1}} 
\newcommand \J {\mathcal{J}}
\newcommand\LB[1]{LB_{#1}}	
\newcommand\LBE[1]{LB_{#1}^{\mathrm{ext}}}	
\newcommand \M {{\mathcal{M}}}
\DeclareMathOperator{\MCG}{MCG} 
\newcommand \trr {\triangleright}
\newcommand{\pn}{\theta} 
\newcommand{\Pn}{\Theta} 
\newcommand{\ignore}[1]{}
\newcommand{\beq}{\begin{equation}}
\newcommand{\eq}{\end{equation}}
\newcommand{\ssigma}{\boldsymbol{\Sigma}} 
\newcommand{\TAR}{lifted Artin representation}
\newcommand{\Rho}{\uprho} 
\newcommand{\Tau}{\uptau}
\newcommand{\SET}{{\mathcal{ SET}}} 
\newcommand{\FINSET}{{\mathcal{ FINSET}}} 
\newcommand{\Sym}{Sym}
\newcommand{\Cg}{Coxeter geometric}
\newcommand{\K}{\Upsilon}
\begin{document}

\title[On a canonical lift of Artin's representation]{On a canonical lift of 
Artin's representation to~loop~braid
  groups}
\date{\today}

\author[Damiani]{Celeste Damiani}
\email[Celeste Damiani]{c.damiani@leeds.ac.uk}

\author[Faria Martins]{Jo\~ao Faria Martins}
\email[Jo\~ao Faria Martins]{j.fariamartins@leeds.ac.uk}

\author[Martin]{Paul Purdon Martin}
\email[Paul Martin]{p.p.martin@leeds.ac.uk}
\address[Damiani, Faria Martins, Martin]{{School of Mathematics, University of 
Leeds, Leeds, LS2 9JT, United Kingdom}}

\thanks{This work was funded by the Leverhulme trust grant ``RPG-2018-029: 
Emergent Physics From Lattice Models of Higher Gauge Theory''. We would like to 
thank Alex Bullivant, Markus Szymik and Paolo Bellingeri for useful 
discussions.}

\subjclass[2010]{Primary 20F36; secondary 57Q45 }

\begin{abstract}

Each pointed
topological space has
an associated \emph{\agg},
obtained from action of its first homotopy group 
on its second homotopy group.
For the $3$-ball with  
a trivial link with $n$-components removed from its interior,
its \agg\ $\M_n$ is of \emph{free type}.
In this paper we give an injection of the \emph{(extended) loop braid group} 
into the 
group of automorphisms of~$\M_n$.
We give a topological interpretation of this injection, showing that it is 
both an extension of Artin's representation for braid groups and of Dahm's 
homomorphism for (extended) loop braid groups.
\end{abstract}

\maketitle

\section{Introduction}
A recent paper~\cite{BCKMM2} 
showed that \emph{discrete higher gauge 
theory} (as discussed generally and informally by many authors, see 
e.g.~\cite[\S10.2]{Martin:1991}, and compare with \emph{continuous} higher gauge 
theory~\cite{Pfeiffer, baez_schreiber, BaezHuerta11,
martins_picken,schreiber_waldorf1} and several others) is well-defined.
The technical engine of the construction notably reflects the
\emph{combinatorial  homotopy}
  of Whitehead, 
  Baues, 
  \textit{et al.}~\cite{Baues4D}.
  Relatively simple aspects of the construction
such as loop-particle braiding~\cite{loopy}
  yield higher generalisations of 
  classical results, for example in low-dimensional topology
  \cite{martins_2009}. 
We discuss one such result, a lifting of Artin's representation of the 
\emph{braid group}~\cite{Birman:book} to the \emph{(extended) loop braid 
group}~\cite{Lin:2008}~(see \cite{Damiani}, for a survey).

\subsection{From mapping class groups to Artin-like representations}
\label{Int_1_1}
Let $X$ be an oriented topological manifold
 with boundary~$\partial X$,
and $A$ a
possibly empty subset in the interior of~$X$.
A \emph{self-homeomorphism of the pair $(X,A)$
relative to the boundary,}
is a self-homeomorphism $g$ of $X$ that fixes
$\partial X$ pointwise, $A$ setwise (i.e.~$g(A)=A$),
and preserves the orientation of~$X$. 
Two such homeomorphisms $f_0$ and $f_1$ are \emph{$(X,A)$-isotopic} 
if they can be included in a $1$-parameter family
$\{f_t\}_{t\in[0,1]}$ of self-homeomorphisms of $(X, A)$ relative to the boundary,
such that the map $X \times [0, 1] \to X$ sending $(x, t)$ to $f_t(x)$ is continuous. 
The mapping class group $\MCG(X,A)$ is the group of $(X,A)$-isotopy classes of self-homeomorphisms of~$(X,A)$.
We write $[g]$ for the class of~$g$. Our convention for the product in $\MCG(X,A)$ is:~$[g][g']=[g \circ g']$.

A self-homeomorphism $g \colon (X,A) \to (X,A)$ relative to the boundary,
takes an $\nn$-path
$[0, 1]^\nn \xrightarrow{\gamma} X$ 
to an $\nn$-path~$[0, 1]^\nn \xrightarrow{g \circ \gamma} X$.
In particular $g$ takes a loop based at a point $\ast$ in $\partial X$
to another such loop.
Furthermore
a homeomorphism that fixes a set fixes its complement.
If~$\gamma$ avoids $A$ then so does~$g\circ\gamma$.
The map~$g$ thus induces an automorphism
of~$\pi_1(X \setminus A,\ast) $
with~$\ast \in \partial X$, 
that is:~$ [\gamma] \mapsto  [g\circ\gamma]$.

Passing from $g$ to~$[g]$, 
this gives a well-defined group homomorphism
\begin{equation}
\label{eq:MtoAut}
\tau \colon \MCG(X,A) \longrightarrow \Aut(\pi_1(X \setminus A,\ast)),
\end{equation}
where~$ [g \circ \gamma] =  [g' \circ \gamma'] $,
if~$g'\in [g]$ and~$\gamma' \in [\gamma]$.
If $G$ is a group, our convention for the product in~$\Aut(G)$ is~$fg=f \circ g$.

By considering based $n$-paths {$([0,1]^n,\d[0,1]^n) \to (X,\ast)$} 
we can in principle use 
other
homotopy functors
analogously in place of~$\pi_1$,
getting representations of mapping class groups this
way.
In this paper, we will use the homotopy functor which sends
a  pointed space $(Y,\ast)$ to the triple $\pi_{(1,2)}(Y,\ast)$ defined as
$(\pi_1(Y,\ast),\pi_2(Y,\ast),\triangleright_{\pi_1})$, where
$\triangleright_{\pi_1}$ is the usual action of $\pi_1$ on~$\pi_2$.
The underlying algebraic notion is that of a
\emph{\agg}, see for example~\cite[Chapter 1, \S 1]{Baues4D}.
\Anagg\ is a triple~$\G=(G,A,\trr)$, where $G$ is a group, $A$ is
an abelian group, and $\trr$ is a left-action by automorphisms of~$G$ 
on~$A$;
see Subsection~\ref{SS:aaagg}.
A morphism of  \agg s is
a pair of group morphisms that respect the actions.

Artin's representation of the braid group~\cite{Artin},
and Dahm homomorphism for the extended loop braid group~\cite{Dahm, Goldsmith} can be seen as a 
special case of this construction. Let us recap here these two maps.

\subsection{Artin representation and Dahm homomorphism}
\label{SS:Artin_and_Dahm}

For~$\nnn \geq 1$, 
let $D^\nnn$ be the $\nnn$-disc~$[0,1]^\nnn \subset \R^\nnn$. 
Fix  
$d_\nn =\{ p_1,...,p_\nn \} \subset D^2 \setminus \partial D^2$
a set of  $\nn$ points in the interior of~$D^2$, 
and  
 {$\C = C_1\cup... \cup C_\nn  \subset D^3 \setminus \partial D^3$}
a set {made out of the union} of $\nn$ disjoint, unknotted, oriented circles, 
that form a trivial link with $\nn$ components 
in the interior of~$D^3$.

The \emph{braid group} $\BB\nn$ 
can be defined as the mapping class group of the pair $(D^2, d_\nn)$~(for a 
survey, see for instance~\cite{Birman_Brendle}).
Analogously the \emph{extended loop braid group}~$\LBE\nn$ 
is defined  as the mapping class group~$\MCG(D^3,\C)$, 
meaning the group of self-homeomorphism of the pair~$(D^3,\C)$, 
relative to the boundary of~$D^3$,
up to~$(D^3,\C)$-isotopy~\cite{Damiani}. 
Note
that this definition appears also in~\cite{Goldsmith}, 
in terms of \emph{motion groups}. Homeomorphisms $(D^3,\C)\to (D^3,\C)$  do not 
necessarily preserve the  orientation of~$\C$. If we consider the group 
$\MCG(D^3,\widehat{\C})$ of isotopy classes of those  homeomorphisms 
$(D^3,\C)\to (D^3,\C)$ that preserve also the orientation on~$\C$,  the group 
obtained is the \emph{loop braid group},
denoted by~$\LB\nn$.
The nomenclature ``loop braid groups'' is due to Lin~\cite{Lin:2008}.

Let $F_\nn$ be the free group 
of rank~$\nn$
generated by~$\{x_1, \ldots, x_\nn\}$. Then we have that $\pi_1(D^2 \setminus 
d_\nn, \ast)$ is isomorphic to~$F_\nn$. Thus, in the case of the 
pair~$(D^2,d_n)$,
Equation~\eqref{eq:MtoAut} becomes:
\[
\pn \colon B_n \to \Aut(F_n) .
\]
Artin shows~\cite{Artin} that this is an injection.

We also have that~$\pi_1(D^3 \setminus \C,\ast)
\cong F_n$, so
the map~$\pn \colon \MCG(D^3,\C)\to \Aut(\pi_1(D^3 \setminus \C,*))$
becomes:
\[
\pn\colon \LBE\nn \to \Aut(F_n).
\]
This homomorphism  is proven injective in~\cite{Dahm} and published 
in~\cite[Theorem~5.2]{Goldsmith}.

\subsection{Our main result.}
The space $D^2\setminus d_n$ is aspherical, and therefore its homotopy type can 
be recovered from the fundamental group. However $D^3 \setminus \C$ is not 
aspherical, hence
$\pi_1$ ``forgets'' more about $D^3  \setminus \C$ than it does
about~$D^2 \setminus d_n$.  In this paper, we work with~$\pi_{(1,2)}(D^3 
\setminus \C,*)$, in the intent of retrieving some of
the lost homotopical information when passing from $D^3  \setminus \C$
to ~$\pi_{1}(D^3 \setminus \C,*)$. How much  homotopical information is retained 
is discussed in Subsection~\ref{remarks}.

In particular, we will prove here that
$\pi_{(1,2)}(D^3 \setminus \C ,\ast)$ is {a \aaagg} of
\emph{free type}\footnote{The nomenclature ``of free type'' is borrowed 
from~\cite[Definition 7.3.13]{Brown_et_al}.}.
In practice, with this we mean 
that~$\pi_{(1,2)}(D^3 \setminus \C ,\ast)= \big(\pi_1(D^3 \setminus \C ,\ast), 
\pi_2(D^3 \setminus \C ,\ast), \triangleright_{\pi_1}\big)$,
is isomorphic to~$\M_n=(F_n,M_n,\trr)$,
where $M_n$ is the free $\Z[F_n]$-module generated by~$\{K_1, \ldots, K_\nn\}$.
Given a \agg~$\G=(G,A,\trr)$, a morphism $f=(f^1,f^2)\colon \M_n \to \G$ is 
therefore determined by the images $f^1(x_i)$ and $f^2(K_i)$; see 
Subsection~\ref{SS:aaagg}.

In this paper we construct an inclusion~$\Pn\colon \LBE\nn \to \Aut(\M_n)$.

\subsection{Structure of the paper}
In Section~\ref{S:Automorph} we recall some notions that will be used throughout 
the paper. We also give a topological realisation of Artin's representation for 
the braid group, and a hands-on flavour of the lift we construct for the 
extended loop braid group. 
In Section~\ref{S:Abelian_gr_groups} we present Dahm and Goldsmith's
lift of Artin's representation for the extended loop braid
group~(Theorem~\ref{gold-theorem}). Then we introduce \agg s
(Definition~\ref{D:agg}) and describe a lift of Artin's representation
for extended loop braid groups in the \agg\ 
$\pi_{(1,2)}(D^3\setminus L_n , \ast)$ of the $3$-ball with a set of $n$ 
(unlinked and unknotted) circles 
excised from its interior (Lemma~\ref{L:Two_circles} and Theorem~\ref{main1}). 
This is our first main result. In Section~4 we formalise the topological 
construction of the considered representation, in the second main result of this 
paper (Theorem~\ref{main2}).

%
%


%

\section{On automorphisms of free groups and beyond} 
\label{S:Automorph}
Here we first recall some constructions that are standard, but which will
have useful lifts to higher dimensions later. 

If ${\mathcal C}$ is a concrete category, then
$F\colon {\mathcal C} \rightarrow \SET$
is the forgetful functor. 
Let $\Grp$ be the category of groups,
and define $\Aut(G) \subset \Grp(G,G)$
as the subset of invertible homomorphisms.
The left adjoint
$F^a \colon \SET \rightarrow \Grp$
takes a set to the free group on that set.
We consider sets of form~$x_{\underline{\nn}}$ defined to be~$\{ x_1 , x_2 , ..., x_\nn \}$,
$\nn \in \N_0$
as a skeleton in the full subcategory $\FINSET$ of finite sets,
and define
\begin{equation}
\label{eq:deFn}
F_\nn = F^a(  \{ x_1, ..., x_\nn \}).
\end{equation}

By the adjoint functor property (``freeness'') elements of
$\Grp(F_n, G )$
are  
uniquely specified
by giving the image of each~$x_i$.
For example, for $\sigma\in \Sym(\nn)$, the symmetric group on~$\{1,\dots,n\}$,
define $t_\sigma \in \Aut(F_\nn)$ by:
\begin{equation}
\label{eq:sigma}
t_\sigma   \colon   x_i \mapsto x_{\sigma(i)}.
\end{equation} 
A \emph{non-automorphism} example in $\Grp(F_\nn,F_\nn)$ is given by~$x_i
\mapsto x_1$.
We use cycle notation for elements of~$\Sym(\nn)$,
thus we have $t_{(12)}$ and so on; 
and define
the \emph{translation automorphism} by~$t = t_{(1,2,...,\nn)}$. 
We say an automorphism is \emph{ \Cg\ } (CG)
if it acts trivially on all except at
most two adjacent~$x_i, x_{i+1}$, and on these it acts to produce
elements of~$F^a(x_i , x_{i+1})$.
We say this action is local at~$i,i+1$.

For example, the subset of automorphisms of form~\eqref{eq:sigma} are
not CG in general.
In particular
the translation automorphism $t$
is not CG.
However the subset forms a subgroup and the subgroup is
generated by 
the CG automorphism $t_{(12)}$ local at~$1$,~$2$
and $\nn-2$ translates thereof.

\subsection{On automorphisms of free groups realised topologically}\label{2dcase}
A topological realisation of $F_\nn$ is given by~$\pi_1(D^2\setminus 
d_\nn,\ast)$.
In order to explicitly write down
an Artin representation~$\pn \colon B_n \to \Aut(F_n)$, 
we specify $d_n$ and a free basis for~$\pi_1(D^2\setminus d_\nn, \ast)$. 
We take the points
$d_\nn = \{ p_1,\dots, p_n \}$
to be along a horizontal line. 
We consider paths~$\widehat{x_i}$, $i=1,\dots,n$  passing clockwise 
around each~$p_i$, as in Figure~\ref{Xms}, 
such that their images intersect only at~$\ast \in \d D^2$. 
Let $x_i$ be the homotopy class of~$\widehat{x_i}$. 

\begin{figure}[thb]
\centering
\includegraphics[scale=0.6]{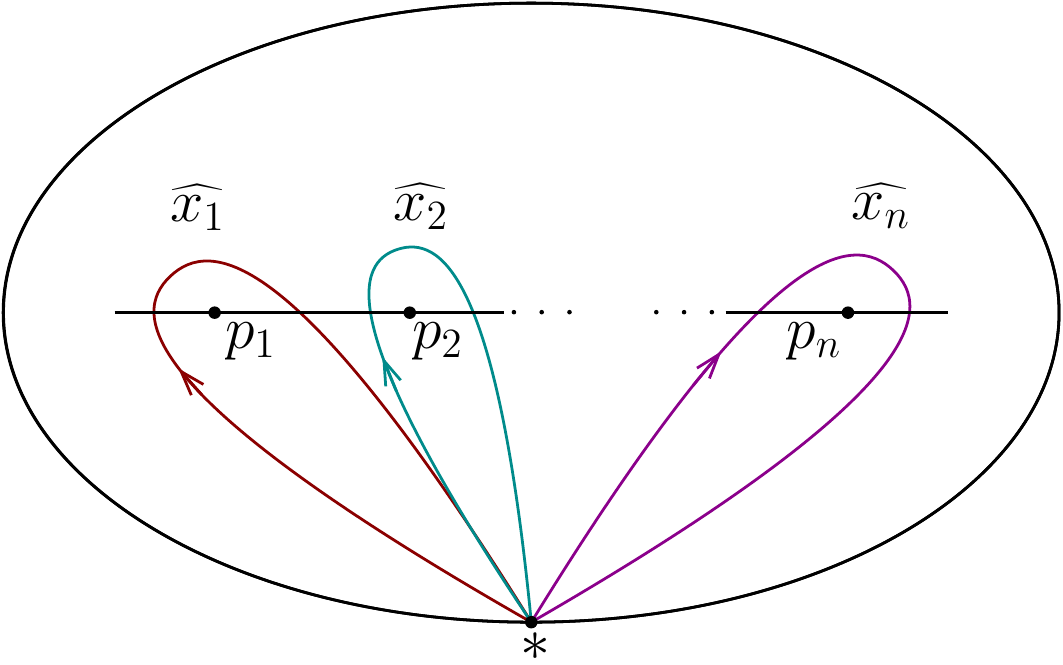}
\caption{A free basis for~$\pi_1(D^2\setminus d_\nn, \ast)$.} 
\label{Xms} 
\end{figure}

Note that 
$\pi_1(D^2\setminus d_\nn, \ast)$ is free on~$\{x_1,\dots, x_n\}$. 
Also, 
$x_1x_2\dots x_n$ is 
the homotopy class of  a path that traces the boundary of~$D^2$, 
clockwise, starting and finishing at~$\ast$.

Note also that $(D^2\setminus d_\nn,\ast)$ strongly deformation retracts 
into the pointed subspace made of the images of the paths, 
which is homeomorphic to~$\bigvee_{i=1}^n (S^1, \star)$.
Combining with Seifert -- van Kampen theorem, this implies that  $\pi_1(D^2\setminus d_\nn,\ast)$ is freely
generated by~$\{x_1,\dots,x_n\}$. 
The same type of argument
will be used when we address the higher case in 
Section~\ref{S:topological_interpretation}, and prove that~$\pi_{(1,2)}(D^3\setminus \C,\ast)$ is of free type.

Using Equation~\eqref{Int_1_1}, elements of $\MCG(D^2,d_\nn)$ induce elements of~$\Aut(F_\nn)$.
Consider for example the mapping class $\Sigma_1$ indicated by \textbf{a) - c)} of Figure~\ref{fig:taut1}.
Note that
this corresponds to the $1$, $2$ local automorphism
$\mathcal{S}_1^1 {\colon F_n \to F_n}$
given by 
\begin{equation}
\label{eq:xbd}
x_1 \mapsto x_2, \hspace{1in}
x_2 \mapsto x_2^{-1} x_1 x_2.
\end{equation}

\begin{figure}[b]
\centering
\includegraphics[scale=0.6]{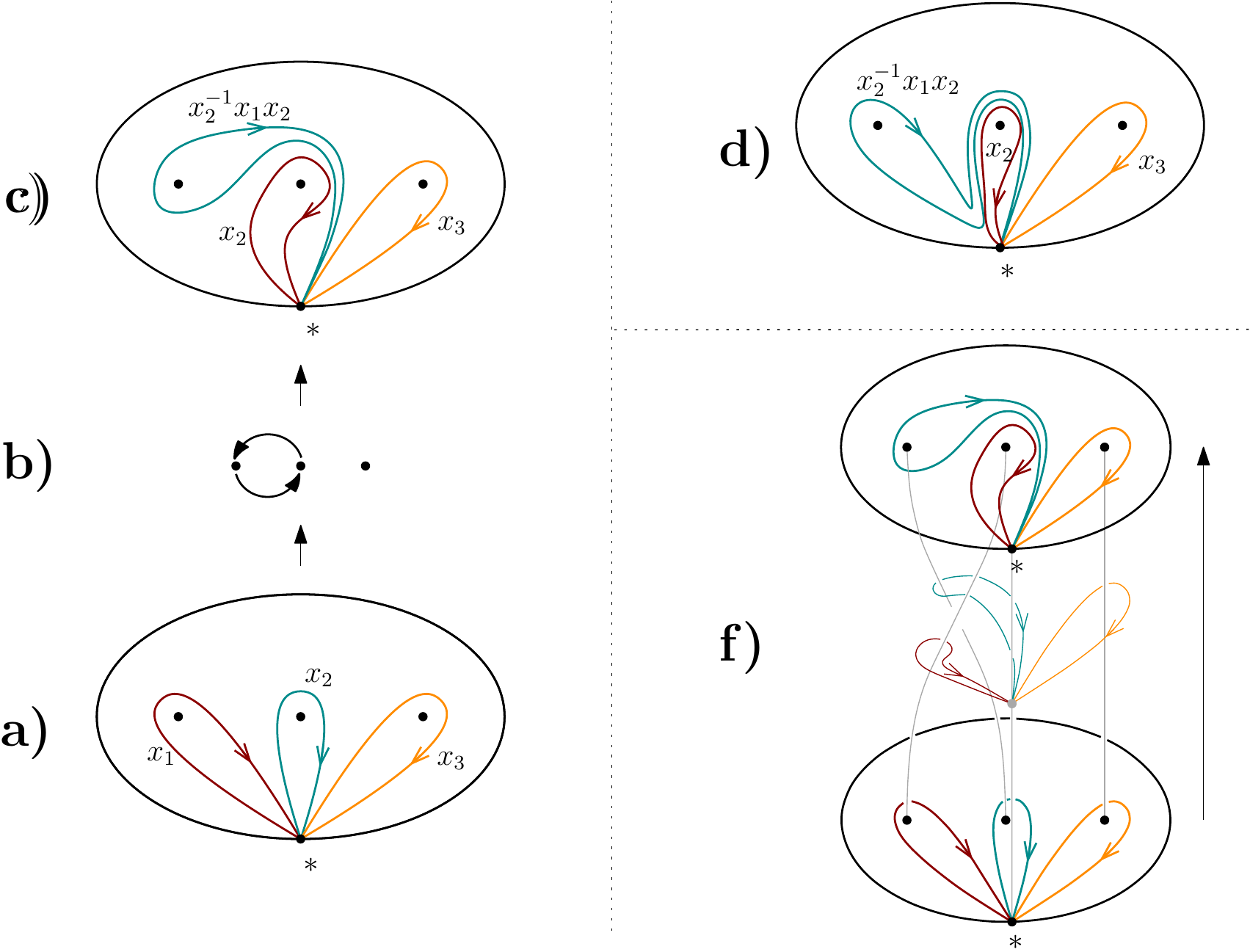}
\caption{\textbf{a) - c)}~An exchange of the marked points inducing a homeomorphism of the disk, and the corresponding automorphism 
of $\pi_1 (D^2\setminus d_3 , \ast)$. \textbf{d)}~The new paths are expressed in the original basis: thus $x_1$ has morphed to $x_2$ and $x_2$ has morphed to $x_2^{-1} x_1 x_2$. \textbf{f)}~The loops following the \emph{timelines} of the braiding. 
}
\label{fig:taut1}
\end{figure}

Define
$\mathcal{S}_i^1 {\colon F_n \to F_n}$
as the translate of~$\mathcal{S}_1^1$;
an explicit formula is in Equation~\eqref{defofS1A}.
From the topological realisation (or direct calculation) we have:
\begin{equation}
\label{eq:YBE}
\mathcal{S}^1_i \circ \mathcal{S}^1_{i+1}\circ \mathcal{S}^1_i = \mathcal{S}^1_{i+1}\circ \mathcal{S}^1_i\circ \mathcal{S}^1_{i+1}.
\end{equation}

\subsection{Towards automorphisms in higher dimension}
\label{SS:towards higher}

Our task here is to describe automorphisms of a suitable lift
of the free group to ``higher dimension''. Here  dimension refers
to the topological interpretation of $F_n$ as~$\pi_1(D^2 \setminus d_n,\ast)$,
whose building blocks are paths in the point-punctured disk~$D^2 \setminus d_n$.

The lift involves the triple~$\pi_{(1,2)}(D^3 \setminus \C ,\ast)= \big(\pi_1(D^3 \setminus \C
,\ast), \pi_2(D^3 \setminus \C ,\ast), \triangleright_{\pi_1}\big)$, where
$\triangleright_{\pi_1}$ denotes the usual action of $\pi_1$ on~$\pi_2$.

In the $F_n$ case, freeness means that
an automorphism 
is determined by the images of
a set of free group generators. 
In the lift that we consider, the corresponding structure $\pi_{(1,2)}(D^3\setminus \C)$ is not 
a free group, but a \agg\ of free type.
In particular, as we will prove in Subsection~\ref{SS:aaagg}, $\pi_2(D^3\setminus \C,\ast)$ is  a free $\Z$-module.
In order to understand the basis we make some preparations.

Given a pointed topological space~$(X,\ast)$, then the action of $\pi_1(X,\ast)$ on~$\pi_2(X , \ast)$, means that~$\pi_2(X , \ast)$,
can naturally be equipped with the 
structure of~${\Z[\pi_1(X ,\ast)]}$-module.
We will give a visual idea
of this structure for the case~$X= D^3\setminus \C$.

We note that the \textit{``balloons and hoops''} point of view we present in 
this subsection for the visual description of~$\pi_{(1,2}(D^3\setminus \C,*)$ is 
essentially as in~\cite{Ballons}, where the term is coined. This balloons and 
hoops approach for understanding~$\pi_{(1,2)}(D^3\setminus \C,*)$  was also used 
in~\cite[\S 4.5.1]{loopy}.

Note that
there is not a canonical choice of the elements considered to be
generating for example in~$\pi_1(D^3\setminus L_n,*)$.
Varying the precise choice of $L_n$ satisfying the
``unlinked circles'' characterisation
affects what might be considered as a natural choice,
and hence affects the construction
--- albeit only up to isomorphism.
Later it will be convenient to work with $L_n$ a row of circles
confined to a plane in~$D^3$.
But in this section we will instead use for $L_n$ a stack of circles
confined to a single axis of rotational symmetry. 

Note that by rotating $[0,1]^2 \times   \{0\}$ about
$\{ 0 \} \times [0,1]\times\{0\}$
we obtain a topological~$D^3$, and this rotation
causes 
the points 
$p_1,\dots,p_n$ to sweep out an~$\C=C_1\cup \dots \cup C_n$,
one where the circles are ``stacked'' coaxially:
see Figure~\ref{fig:rot13}, subfigures~\textbf{a)},~\textbf{b)}.
Note that we also assume that the circles $C_1,\dots,C_n$ are stacked on top of each others, in decreasing height.

\begin{figure}[htb]
\centering
\includegraphics[scale=0.6]{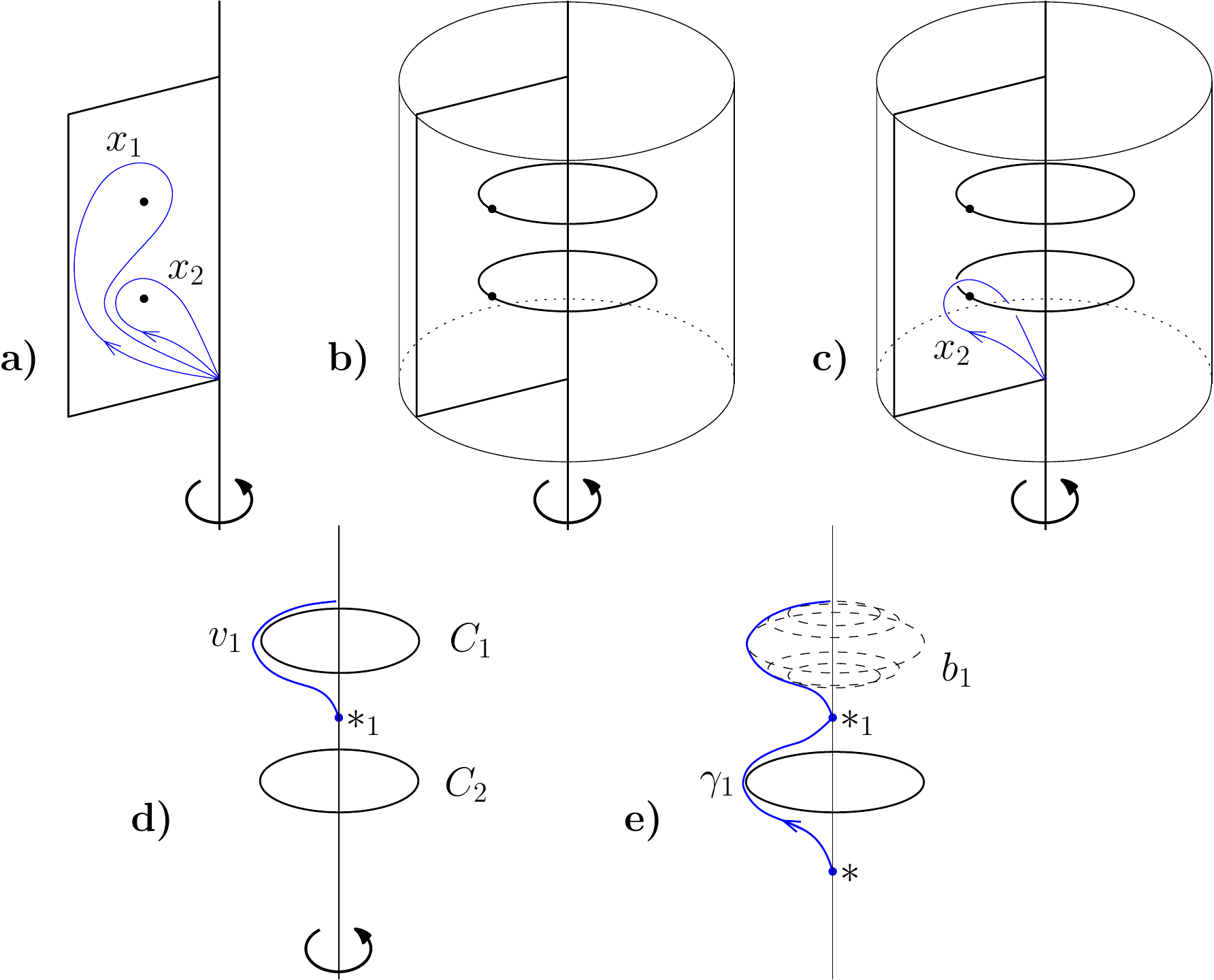}
\caption{\textbf{a), b), c)} Rotating the two-punctured disk into~$\R^3$. \textbf{d), e)}  
constructing a sphere (a ``balloon'') $b_1$, enclosing $C_1$, by rotating the path 
$v_1$. The element $K_1\in \pi_2(D^3 \setminus L_n,\ast )$ is obtained from the 
homotopy class $K'_1\in  \pi_2(D^3 \setminus L_n, \ast_1 )$ of a positively oriented 
parametrisation $(D^2,\d D^2) \to (b_1,\ast_1)$, of $b_1$,  made into an element $K_1\in 
\pi_2(D^3 \setminus L_n,\ast)$ by using the isomorphism $\pi_2(D^3 \setminus L_n,\ast_1 )\to \pi_2(D^3 \setminus L_n,\ast )$ derived from  the path $\gamma_1$.}
\label{fig:rot13} 
\end{figure}

The group $\pi_1(D^3 \setminus L_n,\ast)\cong F_n$ is freely generated by
(classes of) loops $x_i$
  around the circles as illustrated in Figure~\ref{fig:rot13}\textbf{(c)}.  These 
classes have representatives that lie in the original copy of~$D^2$.
The image of the element called $x_i$ in $\pi_1(D^2\setminus d_n,\ast)$, as in   Figure~\ref{fig:rot13}\textbf{(a)}, we
again call~$x_i$.

Now let $g$ be a homeomorphism {representing a certain class} in~$\MCG(D^2,d_\nn)$.
This {$g$} restricts to~$(0,1]\times [0,1] $:  
the omitted edge is the rotation axis, so 
rotating gives a self-homomorphism of~$(0,1]\times [0,1] \times S^1$.
Topologically, $(0,1]\times [0,1] \times S^1$ is
$D^3$ with the axis removed.
The {latter homeomorphism}  thus extends to a self-homomorphism $r(g)$ of $D^3$ by inserting
the constant function on the axis.
This construction lifts to a well-defined {group homomorphism}
\[
r \colon \MCG(D^2,d_n) \rightarrow \MCG(D^3,L_n).
\]

In particular, consider the 
mapping class $\Sigma_i$ in $\MCG(D^2,d_n)$
exchanging consecutive points $p_i$ and $p_{i+1}$, as in Figure~\ref{fig:taut1}, which considers the case $n=3$ and~$i=1$.
Its image under ~$r$, which we also denote~$\Sigma_i$,
corresponds to  an exchange of circles,
as illustrated in Figure~\ref{F:loopin0}, for $n=2$ and~$i=1$.
To the mapping class $\Sigma_i\in \MCG(D^3,\C)$ we will call \emph{elementary braid permutation}.

\begin{figure}
  \includegraphics[width=3.3cm]{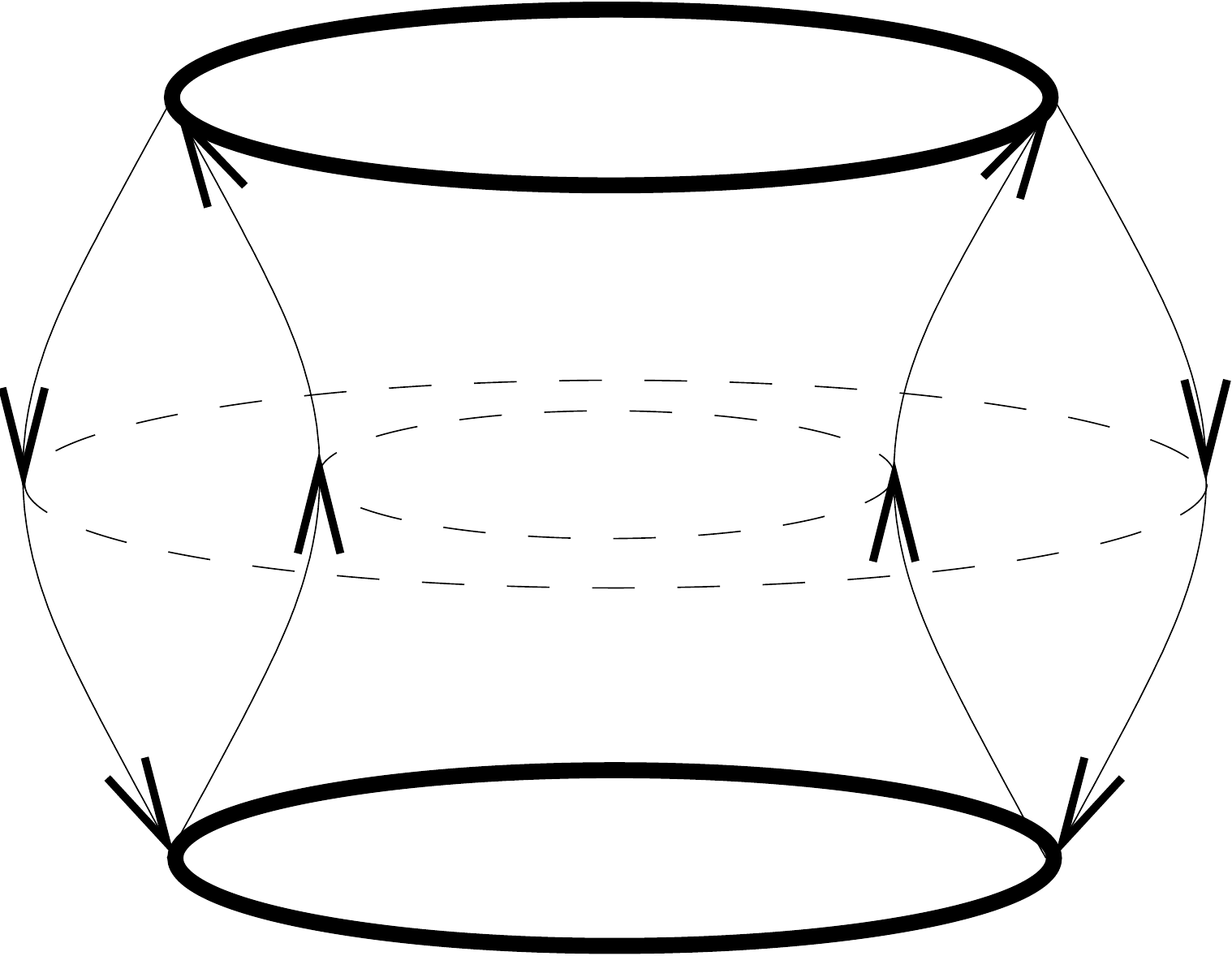}
\caption{An ``exchange'' of coaxial circles,
 where the circle $C_1$ is originally on top.
  This  is a smooth family of embeddings of  $S^1\sqcup S^1$ into~$D^3$.
  It induces a diffeotopy $t \in [0,1] \mapsto \phi_t \in \mathrm{Homeo}(D^3)$ of~$D^3$, relative to the boundary, by the isotopy extension
theorem (either before or after applying~$r$).
The end-value $\phi_1\in \mathrm{Homeo}(D^3)$ of the isotopy is a homeomorphism~$(D^3,L_2) \to (D^3,L_2)$.
The elementary braid permutation $\Sigma_1\in \MCG(D^3,L_2)$ is the mapping class of~$\phi_1$.
\label{F:loopin0}} 
\end{figure}

One  new aspect in the higher setting is that there are elements of
$\MCG(D^3,L_n)$ that do not have representatives with the rotational
symmetry, such as maps that exchange the circles by taking them
off the rotation axis.
Note that these induce elements of the automorphism group $\Aut( \pi_1(D^3 \setminus L_n) )$ of type~$t_{(i,i+1)}$.
Also breaking the axial symmetry in this sense, are motions that flip
a single circle {$C_i$} onto itself.
One can see that this induces an automorphism local at~$i$, given by
$T \colon x_i \mapsto x_i^{-1}$.

Another  new aspect in the higher setting is that
$\pi_2(D^3 \setminus L_n, \ast)$  is not
trivial. An example of a non-trivial element of
$\pi_2(D^3 \setminus L_n,\ast_i)$ 
is the homotopy class 
of a 
\emph{wrapping square} $K_i'\colon (D^2,\d D^2) \to (D^3 \setminus L_n,\ast_i)$,
based at $\ast_i$ (a point in the rotating axis) that wraps a single circle~$C_i$, including the disk of which ${C_i}$ is boundary,
exactly once,
see Subfigures~{\textbf{d)} and \textbf{e)}} in
Figure~\ref{fig:rot13} for the {$i,n=2$} case. Concretely $K_i'$
is a positively oriented parametrisation of a \emph{balloon} -- i.e. a $2$-sphere $b_i$ containing~$C_i$,
and no other circle~$C_j$, oriented by an exterior normal. If
we connect $\ast_i$ to $\ast$ by a path $\gamma_i$ that does not cross the
disks spanned by the circles in~$\C$, and consider the usual
isomorphism $\pi_2  (D^3 \setminus L_n,\ast_i) \to \pi_2  (D^3
\setminus L_n,\ast)$ derived from $\gamma_i$~{(see e.g.~\cite[Page 343]{Hatcher})},
this gives a non-trivial element $K_i =[\gamma \triangleright K_i'] \in \pi_2(D^3 \setminus L_n,\ast)$.

We argue in Section \ref{S:topological_interpretation},
  that $\pi_2(D^3 \setminus \C,\ast)$ is freely generated, as an abelian group, by the 
elements $p \trr K_i$, where $i \in \{1, \dots,n\}$ and $p \in
\pi_1(D^3 \setminus \C,\ast)$. Each $p \trr K_i$ can be visualised as a
hoop~$p$, connecting $\ast$ to~$\ast$, which is then attached to~$K_i$.
In Figure~\ref{fig:OO1x}
we show some examples of elements of~$\pi_2(D^3\setminus L_2, \ast)$.

\begin{figure}[htb]
\centering
  \includegraphics[scale=0.6]{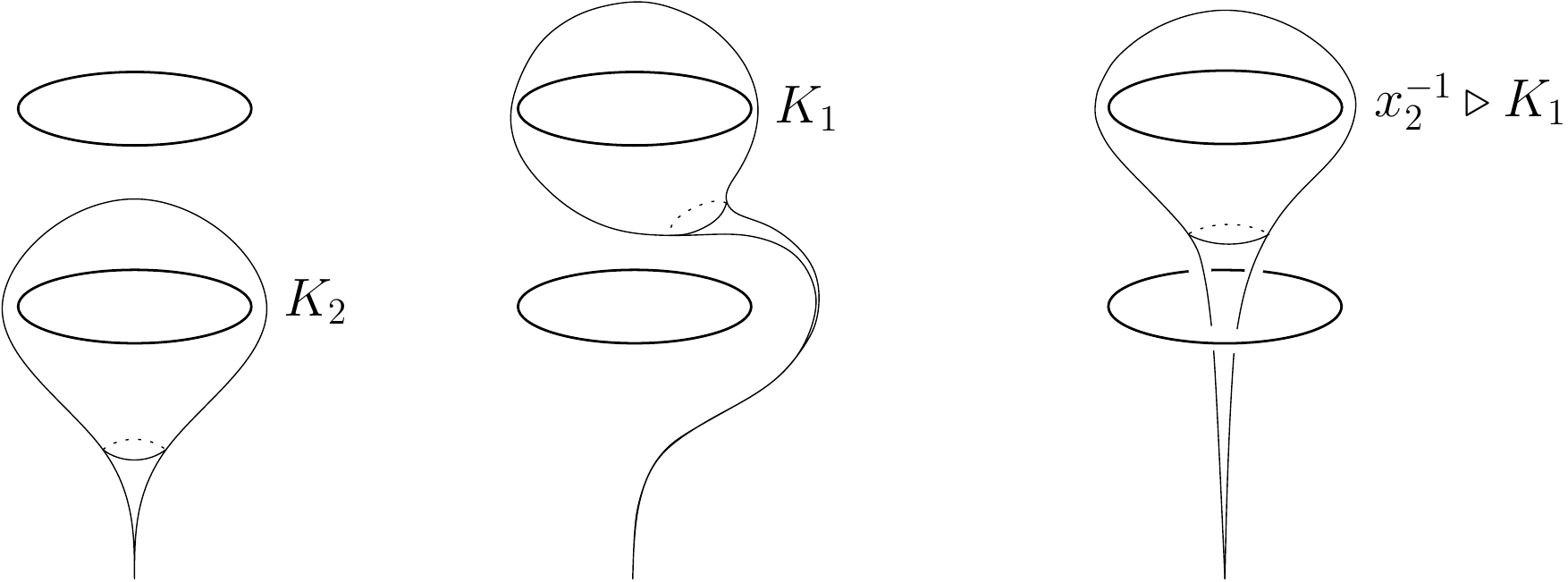}
\caption{Elements in $ {\pi_2(D^3 \setminus L_2,\ast)}$.
  We number the top circle {1}.
Note that the difference between the second and third figures is
given by the path $x_2^{-1}$ around circle~${C_2}.$}
\label{fig:OO1x} 
\end{figure}

A crucial property of the classes $K_1,\dots,K_n \in \pi_2(D^3\setminus \C,\ast)$ is that  $K_1+K_2+\dots+K_n$ can be represented by a parametrisation~$(D^2,\d D^2) \to (\d D^3,\ast)$. 
With these points  in mind, we lift consideration of the mapping class group
action on
the fundamental group to an
action of the mapping class group $\MCG(D^3,\C)$
 on the 
triple:
\[
  {\pi_{(1,2)}(D^3\setminus L_n , \ast)= (\pi_1(D^3\setminus L_n, \ast), \pi_2(D^3\setminus L_n, \ast), \triangleright_\pi)},\]
where $\triangleright_\pi$ denotes the usual action of  $\pi_1(D^3\setminus L_n, \ast)$ on~$\pi_2(D^3\setminus L_n, \ast)$.
Since {$\pi_2(D^3\setminus L_n, \ast)$} is abelian,
$\pi_{(1,2)}(D^3\setminus L_n, \ast)$
is a  \aaagg, a structure we will define in Section~\ref{SS:aaagg} .

\section{The \TAR}
\label{S:Abelian_gr_groups}

We discussed in Subsection~\ref{SS:Artin_and_Dahm} that
the homotopical information coming from
$\pi_2(D^3\setminus L_n, \ast)$  is not taken 
into account by Dahm's lift of Artin's representation.  With this in 
mind, we proceed to define a
\emph{\TAR}
for extended loop braid groups~$\LBE\nn$.
 The codomain for our representation stores two levels of information,  coming from the first and  second homotopy groups of~$D^3
 \setminus \C$, as well as the action of the first on the former.
It will thus be a \emph{\agg}. 
In this Section, we recap some results about
extended loop braid groups and \agg s, which will subsequently be used to define the~\TAR.

\subsection{Loop braid groups and extended loop braid groups}
\label{SS:review}

We recall~\cite{Artin} that the braid group $\BB\nn = \MCG(D^2,d_n)$ is isomorphic to the group defined by generators
$\{\sigma_1,\dots, 
\sigma_{n-1}\}$, subject to relations:
\begin{subequations}
\begin{align}
\sigma_i\sigma_j & =\sigma_j\sigma_i, &\textrm{ for } |i - j| > 1; \label{bg1}\\
\sigma_i\sigma_{i+1}\sigma_i &=\sigma_{i+1}\sigma_{i}\sigma_{i+1}, &\textrm{ for 
} i = 1, \dots , n - 2. \label{bg2}
\end{align}
\end{subequations}

The generator $\sigma_i$ is the image of the mapping class $\Sigma_i$ of
$\MCG(D^2,d_n)$ exchanging the $i$th and $i+1$th point. 

Passing to the automorphisms of the free group $F_\nn$ of rank~$\nn$, let $\mathcal{S}_i^1\colon F_n \to F_n$ be
defined on the generators $\{x_1,\dots,x_n\}$ as:
\begin{equation}
\label{defofS1A}
\mathcal{S}_i^1 \colon
\begin{cases}
	x_i \mapsto  x_{i+1}, \\ 
	x_{i+1} \mapsto  x_{i+1}^{-1} \, x_i \,x_{i+1}, \\ 
	x_j \mapsto  x_j, \textrm{ if } j<i \textrm{ or } j>i+1\\
\end{cases}.
\end{equation}

Note from Figure~\ref{fig:taut1} that 
Artin's representation
$\pn\colon \BB\nn \to \Aut(F_n)$ is such that, for 
$i=1,\dots,n-1$,~$\Sigma_i \mapsto \mathcal{S}_i^1$, and this data determines~$\pn$.

\begin{Theorem}[{\cite{Artin}, for a more recent proof see for 
instance~\cite[Theorem 5.1]{Hansen}}]
\label{artin_theorem}
The map
$\pn\colon \BB\nn \to \Aut(F_n)$ is injective, and an automorphism 
$\phi$ of~$F_\nn$ is in $\pn(\BB\nn)$ if and only if the following two 
conditions are satisfied:
\begin{enumerate}
\item \label{conjugating} There exist $a_1,\dots, a_n \in F_n$, and a 
permutation $\alpha$ of $\{1,\dots, n\}$, such that  $\phi(x_i)=a_i \, 
x_{\alpha(i)}\, a_i^{-1}$. 
\item \label{prod-of-gens} $\phi(x_1 \, x_2 \dots x_n)=x_1 \, x_2 \dots x_n$. 
\end{enumerate}
\end{Theorem}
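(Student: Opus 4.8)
The plan is to establish the three assertions in turn: injectivity of $\pn$, the necessity of conditions~\eqref{conjugating} and~\eqref{prod-of-gens}, and their sufficiency.

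For injectivity I would argue topologically. The space $D^2\setminus d_n$ is aspherical, since it deformation retracts onto a wedge of circles as recalled in Subsection~\ref{2dcase}, so its homotopy type is determined by $\pi_1\cong F_n$. If a mapping class $[g]\in\MCG(D^2,d_n)$ satisfies $\pn([g])=\id$, then $g$ acts trivially on $\pi_1$ and is therefore homotopic, rel the basepoint $\ast\in\partial D^2$, to the identity. For a compact surface with non-empty boundary, a self-homeomorphism fixing $\partial D^2$ pointwise that is homotopic to the identity is in fact isotopic to the identity rel boundary (the standard fact that the mapping class group of such a surface injects into $\Aut(\pi_1)$); hence $[g]$ is trivial and $\pn$ is injective.

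For necessity, I would verify the two conditions on the generators and then propagate them to all of $\pn(B_n)$ by observing that each condition cuts out a subgroup of $\Aut(F_n)$. The automorphisms satisfying~\eqref{conjugating}, namely those sending each generator to a conjugate of a generator with a well-defined underlying permutation, are closed under composition and inversion, so they form a subgroup; and each $\mathcal{S}_i^1$ lies in it, with underlying permutation $(i,i+1)$. Likewise the stabiliser of the word $x_1x_2\cdots x_n$ is a subgroup, and a direct substitution gives $\mathcal{S}_i^1(x_1\cdots x_n)=x_1\cdots x_n$; topologically this is immediate, since $x_1\cdots x_n$ is the boundary loop, which is fixed pointwise. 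As the $\mathcal{S}_i^1$ generate $\pn(B_n)$, both conditions hold throughout the image.

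The substantive part is sufficiency, and this is where I expect the main obstacle. Given $\phi$ satisfying~\eqref{conjugating} and~\eqref{prod-of-gens}, I would first dispose of the permutation: reading off the underlying permutation in~\eqref{conjugating} yields a homomorphism $\pn(B_n)\to\Sym(n)$ which, precomposed with $\pn$, is the usual surjection $\sigma_i\mapsto(i,i+1)$, so some braid $\beta$ has underlying permutation $\alpha^{-1}$, and replacing $\phi$ by $\pn(\beta)\circ\phi$ reduces us to the case $\alpha=\id$, i.e.\ $\phi(x_i)=a_i\,x_i\,a_i^{-1}$, still subject to~\eqref{prod-of-gens}. It then remains to show that such a pure permutation-conjugacy automorphism lies in $\pn(P_n)$, with $P_n$ the pure braid group. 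I would argue by induction on a complexity measure, for instance the total reduced length $\sum_i|a_i|$ with each $a_i$ chosen minimally modulo its centraliser $\langle x_i\rangle$: the constraint $\phi(x_1\cdots x_n)=x_1\cdots x_n$ forces enough structure on the $a_i$ that one can always precompose $\phi$ with a suitable pure braid generator so as to strictly decrease the complexity, terminating at the identity. The delicate point, and the crux of the whole argument, is verifying that condition~\eqref{prod-of-gens} always supplies such a length-reducing move; equivalently, one may recast this as showing that an automorphism preserving the peripheral structure of the punctures and fixing the boundary word is geometrically realised by a homeomorphism of $(D^2,d_n)$, which is precisely the statement that it lies in the image of $\MCG(D^2,d_n)$.
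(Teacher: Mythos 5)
First, note that the paper does not actually prove Theorem~\ref{artin_theorem}: it is quoted from Artin and from Hansen, and the only accompanying argument in the text is the observation that condition~\eqref{prod-of-gens} is necessary because $x_1x_2\cdots x_n$ is the class of the boundary loop, which every mapping class fixes. So the comparison is really with the classical proof, whose overall shape (verify the conditions on generators; then an induction on word length for the converse) your outline does follow.

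The genuine gap is in the sufficiency direction, and you have in effect flagged it yourself. The entire content of Artin's theorem beyond routine verification is the claim that a conjugating automorphism $\phi(x_i)=a_i\,x_i\,a_i^{-1}$ satisfying $\phi(x_1\cdots x_n)=x_1\cdots x_n$ can always be precomposed with a generator so as to strictly decrease $\sum_i|a_i|$ (suitably normalised). You assert that condition~\eqref{prod-of-gens} ``forces enough structure on the $a_i$'' to supply such a move, but you do not exhibit the move or prove it exists; this is precisely the delicate cancellation analysis (Nielsen-style) that occupies Artin's original argument and \cite[Theorem 5.1]{Hansen}, and without it the induction does not get off the ground. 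The reduction to the pure case via the permutation homomorphism is fine, as is the necessity argument (both conditions cut out subgroups of $\Aut(F_n)$ containing the $\mathcal{S}_i^1$). A secondary problem is the injectivity paragraph: as written it is circular, since you justify ``homotopic to the identity implies isotopic to the identity'' by appealing to ``the standard fact that the mapping class group of such a surface injects into $\Aut(\pi_1)$'' --- which is the statement being proved. The intended route (asphericity of $D^2\setminus d_n$ gives homotopy triviality rel basepoint, then a homotopy-implies-isotopy theorem for surfaces with boundary, e.g.\ via the Alexander method) is legitimate, but the nontrivial input must be named as such rather than as the conclusion itself.
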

Note that \textit{\eqref{prod-of-gens}} holds since $x_1x_2\dots x_n$ is  
the homotopy class of  a path that traces the boundary of $D^2$, 
clockwise, starting and finishing at $*$. Hence $x_1x_2\dots x_n$ is  left untouched  by all elements of $\MCG(D^2,d_\nn)$, since they are required to be the identity on~$\d D^2$.
\medskip

Let us move one dimension up, in the realm of extended loop braid
groups~$\LBE\nn$, defined as the mapping class groups~$\MCG(D^3,\C)$,
 and loop braid groups~$\LB\nn$, defined as
$\MCG(D^3,\widehat{\C})$; see Section~\ref{SS:Artin_and_Dahm}).
In this case we have the following result \cite{Brendle_Hatcher, Baez_et_al, Damiani}:
\begin{Theorem}\label{gensandrelsforLBG} 
{The  group~$\LB\nn$ is isomorphic to  the abstract presented group defined by generators
$\{\sigma_i , \rho_i \mid  i = 1, \dots , n - 1\} $
subject to relations:}
\begin{subequations}
\begin{align}
\sigma_i\sigma_j & =\sigma_j\sigma_i, &\textrm{ for } |i - j| > 1 
\label{LBG1};\\
\sigma_i\sigma_{i+1}\sigma_i &=\sigma_{i+1}\sigma_{i}\sigma_{i+1}, &\textrm{ for 
} i = 1, \dots , n - 2;\\
\rho_i
\rho_j & = \rho_j \rho_i, &\textrm {for } |i - j| > 1;\\
\rho_i \rho_{i+1}\rho_i &= \rho_{i+1}\rho_i \rho_{i+1}, &\textrm{ for } i = 1, 
\dots , n - 2;\\
\rho_i^2 &= \id, &\textrm{ for } i = 1, \dots , n - 1;\\
\rho_i\sigma_j &=\sigma_j \rho_i , &\textrm{ for } |i - j| > 1;\\
\rho_{i+1}\rho_{i}\sigma_{i+1} &=\sigma_i \rho_{i+1}\rho_i &\textrm{ for } i = 
1, \dots , n - 2;\\
\sigma_{i+1}\sigma_i \rho_{i+1} &= \rho_i\sigma_{i+1}\sigma_i,& \textrm{ for }  
i = 1, \dots , n - 2.\label{LBG8}
\end{align}
\end{subequations}
Moreover the  group $\LBE\nn$ is isomorphic to the abstract presented group defined by generators
$\{\sigma_i , \rho_i
\mid i = 1, \dots , n - 1\} \cup \{\tau_j \, |\, j=1, \dots, n\}$
subject to relations \eqref{LBG1} to \eqref{LBG8} above, together with:
\begin{subequations}
\begin{align}
\tau_i \tau_j &= \tau_j \tau_i, &\textrm{ for } i\neq j;\label{eLBG1}\\
\tau_i^2&=\id, &\textrm{ for } i=1,\dots, n;\\
\sigma_i \tau_j &= \tau_j\sigma_i, &\textrm{ for } |i - j| > 1;\\
\rho_i \tau_j &= \tau_j\rho_i, &\textrm{ for } |i - j| > 1;\\
\tau_i\rho_i &= \rho_i \tau_{i+1}, & \textrm{ for } i = 1, \dots , n - 1;\\
\tau_i\sigma_i &= \sigma_i \tau_{i+1}, &\textrm{ for } i = 1, \dots , n - 1;\\
\tau_{i+1}\sigma_i &= \rho_i\sigma_i^{-1} \rho_{i}\tau_{i},& \textrm{ for } i  = 
1, \dots , n - 1.  \label{eLBG7}
\end{align}
\end{subequations}
\end{Theorem}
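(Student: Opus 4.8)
The statement is classical and is assembled in the cited works; here I sketch the strategy I would follow to prove it from scratch. The plan has three stages. First I would realise the abstract generators $\sigma_i,\rho_i$ (and, for the extended group, $\tau_j$) as explicit mapping classes of $D^3$. Second I would check that every listed relation holds, by exhibiting for each one an ambient isotopy rel $\partial D^3$ between the two configurations of circles; this produces a surjection from the abstractly presented group onto the mapping class group. Third, and this is where the real content lies, I would prove that this surjection is injective, i.e.\ that the listed relations are complete.

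For the generators I would use the three families of motions already isolated in Section~\ref{S:Automorph}. The braid generator $\sigma_i$ is the elementary braid permutation $\Sigma_i$, namely the image under the rotation map $r\colon\MCG(D^2,d_n)\to\MCG(D^3,L_n)$ of the two-dimensional generator; geometrically it passes $C_i$ through $C_{i+1}$. The generator $\rho_i$ is the planar exchange that swaps $C_i$ and $C_{i+1}$ by taking them off the rotation axis, and it induces the transposition $t_{(i,i+1)}$ on $\pi_1$. The generator $\tau_j$ is the flip of the single circle $C_j$ onto itself, inducing $x_j\mapsto x_j^{-1}$. With these in hand, the braid relations among the $\sigma_i$, that is \eqref{LBG1} together with $\sigma_i\sigma_{i+1}\sigma_i=\sigma_{i+1}\sigma_i\sigma_{i+1}$, are pulled back through $r$ from the braid relations \eqref{bg1}--\eqref{bg2} of $\BB\nn$; the relations among the $\rho_i$ are the Coxeter relations of $\Sym(n)$, realised by disjoint planar swaps, together with $\rho_i^2=\id$; one has $\tau_j^2=\id$ because a double flip of a circle is isotopic rel $\partial D^3$ to the identity, and the $\tau_j$ commute because the flips have disjoint support; and each remaining mixed relation up to \eqref{LBG8}, and then \eqref{eLBG1}--\eqref{eLBG7}, is verified by drawing its two sides as isotopic families of embeddings of $n$ disjoint circles into $D^3$.

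\textbf{The main obstacle is the third stage,} proving that the surjection from the presented group is injective, since it is here that one must rule out hidden relations. For this I would identify $\LB\nn$ with the fundamental group of the configuration space of $n$ unlinked, unknotted circles in $D^3$, via the motion-group description of Goldsmith~\cite{Goldsmith} collected in the survey~\cite{Damiani}, and then invoke the analysis of Brendle--Hatcher~\cite{Brendle_Hatcher} of the space of ``rings and wickets'': its cell structure yields exactly the presentation with generators $\sigma_i,\rho_i$ and relations \eqref{LBG1}--\eqref{LBG8}. An alternative, more self-contained, route is induction on $n$ using the fibration that forgets one circle, which gives a short exact sequence $1\to\pi_1(\text{fibre})\to\LB\nn\to\LB{n-1}\to 1$ to be matched term by term with the analogous sequence of presented groups; the delicate point there is the precise identification of the fundamental group of the fibre (configurations of one ring in the complement of the others) together with its monodromy action.

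Finally I would treat the passage from $\LB\nn$ to $\LBE\nn$ as an enlargement by the flips. The subgroup $\langle\tau_1,\dots,\tau_n\rangle$ is isomorphic to $(\Z/2\Z)^n$, and relations \eqref{eLBG1}--\eqref{eLBG7} precisely encode that the flips commute, have order two, and intertwine with the braidings and swaps as the geometry dictates; verifying these isotopies and then re-running the configuration-space count while allowing orientation-reversing motions of individual rings upgrades the presentation of $\LB\nn=\MCG(D^3,\widehat{\C})$ to that of $\LBE\nn=\MCG(D^3,\C)$. The complete details of all three stages are assembled in~\cite{Damiani,Baez_et_al,Brendle_Hatcher}, to which I would refer for the parts I only sketch here.
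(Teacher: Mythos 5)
Your outline is consistent with how this theorem is actually established in the literature, and it is worth noting that the paper itself offers no proof at all: Theorem~\ref{gensandrelsforLBG} is simply quoted from \cite{Brendle_Hatcher, Baez_et_al, Damiani}. Your identification of the generators with the mapping classes $\Sigma_i$, $\Rho_i$, $\Tau_j$, and your observation that the relations are verified by exhibiting explicit ambient isotopies while the genuine content lies in proving that no further relations are needed, correctly locates the difficulty; the Brendle--Hatcher analysis of the configuration space of unlinked, untwisted rings (and its variant allowing flips, which yields the extended presentation) is precisely the source that closes that gap. The one caveat is that your ``proof'' is a programme rather than an argument: stage three, the completeness of the relations, is delegated entirely to the cited references, and your alternative route via the fibration forgetting one circle is only gestured at (the identification of the fibre and its monodromy is exactly where that approach becomes hard, and Brendle--Hatcher's semidirect-product decomposition of the pure subgroup is what makes it work). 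Since the paper treats the theorem as an imported black box, you have in fact said more than the authors do; just be aware that what you have written would not stand alone as a proof without importing the main theorem of \cite{Brendle_Hatcher}.
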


Let $\Sigma_i$ denote the mapping class corresponding to the {elementary braiding permutation} in~$\MCG(D^3,L_n)$.
In the coaxial configuration of~$L_n$,~$n=2$, we may represent this as in
Figure~\ref{F:loopin0}.
Similarly, let $\Rho_i$ denote the mapping class of the \textit{non-braiding permutation}; and $\Tau_i$ be the mapping class corresponding to a 180 degrees flip of $C_i$ with respect to the vertical axis, see Figure~\ref{F:flips}.
The isomorphism between $\LBE\nn$ and the abstract presented group
sends~$\Sigma_i \mapsto \sigma_i$,~$\Rho_i \mapsto \rho_i$, and~$\Tau_i \mapsto \tau_i$.

\begin{figure}[hbt]
\centering
\includegraphics[scale=0.6]{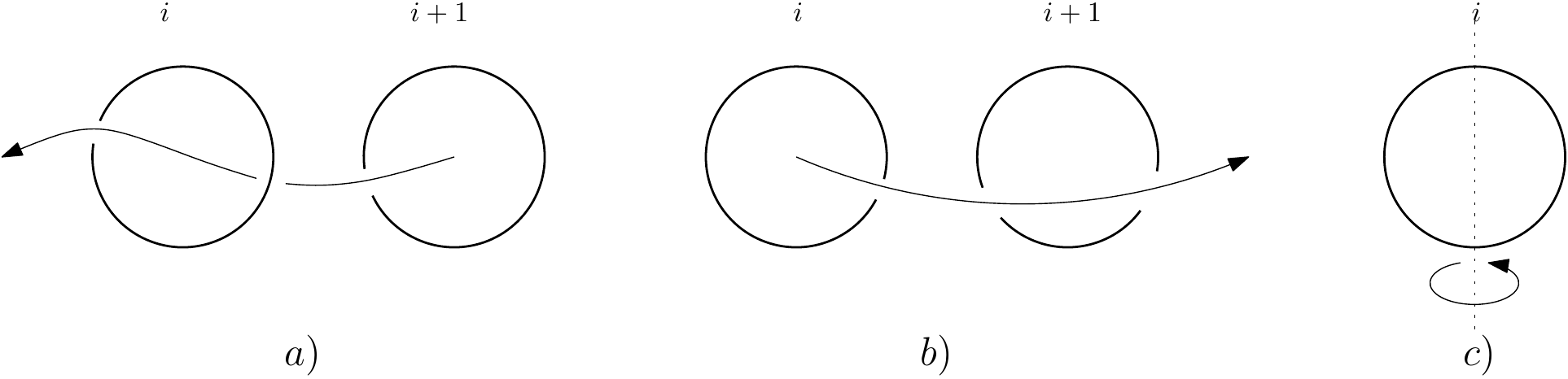}
\caption{$a)$ Pictorial idea of the mapping class~$\Sigma_\ii$. $b)$ The mapping class~$\Rho_\ii$.  $c)$ The mapping class~$\Tau_\ii$.}
\label{F:flips}
\end{figure}

Dahm~\cite{Dahm} generalised Artin's representation to general manifolds with a  compact submanifold in its interior. Goldsmith published his result for the case 
of the pair~$(\R^3, \C)$, which gives the same group as the pair~$(D^3, \C)$. 
Let us recall it: 

\begin{Theorem}[{\cite[Theorem 5.3]{Goldsmith}}]
\label{gold-theorem} 
For~$\nn \geq 1$, the map
\[
\pn \colon  \LBE\nn \longrightarrow \Aut(F_n).
\]
is injective.
Its image is the subgroup of $\Aut(F_\nn)$ consisting of automorphisms of the 
form $x_\ii \mapsto a_\ii\inv x^{\pm 1}_{\alpha({\ii})} a_\ii$ where $\alpha$ 
is a permutation of $\{1,\dots,\nn\}$ and~$a_\ii\in F_n$.
Moreover, this subgroup of $\Aut(F_\nn)$ is generated by the automorphisms
$\{\mathcal{S}_\ii^1 \mid i=1, \dots, \nn-1\}$, $\{\mathcal{R}_\ii^1 \mid i=1, \dots, \nn-1\}$ and 
$\{\mathcal{T}_\ii^1 \mid i=1, \dots, \nn\}$,  where  $\mathcal{S}_i^1$ is as in Equation~\eqref{defofS1A}, 
and: 
\begin{align}
\label{E:R}
\mathcal{R}_\ii^1 &: \begin{cases}
            x_\ii \mapsto x_{\ii+1}; & \\
            x_{\ii+1} \mapsto x_\ii; &\\
            x_\jj \mapsto x_\jj, \ &\text{for} \ \jj \neq \ii, \ii+1.
		\end{cases} \\
\label{E:T}
\mathcal{T}_\ii^1 &: \begin{cases}
            x_\ii \mapsto x\inv_\ii; &\\
            x_\jj \mapsto x_\jj, \ &\text{for} \ \jj \neq \ii.
        \end{cases}
\end{align}
\end{Theorem}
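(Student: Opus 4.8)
The plan is to establish the three assertions---injectivity, the description of the image, and the generating set---in that logical order, treating injectivity last since it is by far the deepest. First I would compute the images of the standard generators. The topological picture for $\Sigma_i$ (Figure~\ref{fig:taut1}, transported to the coaxial configuration) gives $\pn(\Sigma_i)=\mathcal{S}_i^1$, exactly as recorded before Theorem~\ref{artin_theorem}; reading off Figure~\ref{F:flips} in the same way, the non-braiding permutation $\Rho_i$ induces the bare transposition $\mathcal{R}_i^1$ and the flip $\Tau_i$ induces $\mathcal{T}_i^1$. Each of $\mathcal{S}_i^1$, $\mathcal{R}_i^1$, $\mathcal{T}_i^1$ has the stated shape $x_k\mapsto a_k^{-1}x_{\alpha(k)}^{\pm1}a_k$, so the image of $\pn$ certainly contains the subgroup $H=\langle \mathcal{S}_i^1,\mathcal{R}_i^1,\mathcal{T}_i^1\rangle$.

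Next I would show that every element of the image lies in the ``conjugate-permutation-with-signs'' subgroup, using the peripheral structure of the complement. A homeomorphism $g$ of $(D^3,\C)$ relative to the boundary permutes the components, say $C_k\mapsto C_{\alpha(k)}$, and carries a tubular neighbourhood of $C_k$ to one of $C_{\alpha(k)}$. Since $x_k$ is a based representative of the meridian of $C_k$, and meridians are characterised as the boundary-torus curves that bound a disc in the neighbourhood, $g$ must send $x_k$ to a conjugate of the meridian of $C_{\alpha(k)}$; because $g$ preserves the orientation of $D^3$, the sign is dictated by whether $g$ preserves or reverses the orientation of $C_k$. Hence $\pn(g)(x_k)=a_k^{-1}x_{\alpha(k)}^{\pm1}a_k$ for some $a_k\in F_n$, giving the containment of the image in the subgroup of such automorphisms. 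It then remains to see that this whole subgroup equals $H$. Forgetting the conjugators defines a surjection onto the signed permutation group $\Sym(\nn)\ltimes(\Z/2)^{\nn}$, which is generated by the images of the $\mathcal{R}_i^1$ and the $\mathcal{T}_i^1$; its kernel is the group of \emph{pure} basis-conjugating automorphisms $x_k\mapsto a_k^{-1}x_k a_k$. The latter is generated by the elementary conjugations $\alpha_{kl}\colon x_k\mapsto x_l^{-1}x_k x_l$ (a result of McCool), and a direct check shows that, for adjacent indices, $\mathcal{R}_i^1\circ\mathcal{S}_i^1$ is precisely such an elementary conjugation; conjugating by permutation automorphisms then produces all the $\alpha_{kl}$. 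Splicing the kernel and quotient generators together shows the subgroup is generated by the three families, so the image equals $H$ and equals the asserted subgroup.

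The main obstacle is injectivity. If $\pn(g)=\id$ then, comparing with the normal form above, the induced permutation is trivial, all signs are $+$, and each conjugator commutes with its generator; the algebraic data is thereby exhausted and one must argue topologically that $g$ is isotopic to the identity. The subtle point---flagged already in the introduction---is that $D^3\setminus\C$ is \emph{not} aspherical: the balloons $b_i$ are essential $2$-spheres and $\pi_2\neq 0$, so the complement is reducible and Waldhausen-type rigidity for Haken manifolds does not apply directly. The route I would take is to isotope $g$ so that it preserves a maximal system of disjoint essential spheres (one enclosing each circle), reducing to the mapping classes induced on the irreducible, aspherical pieces obtained by cutting---where Waldhausen rigidity applies and the trivial $\pi_1$-action forces isotopy to the identity---together with the residual ``sphere twists'' supported near the $b_i$. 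The heart of the matter, and where I expect the real work to lie, is verifying that no nontrivial mapping class is concealed in these sphere-twist contributions, i.e. that every class acting trivially on $\pi_1$ is already trivial in $\MCG(D^3,\C)$; this is exactly the content of Dahm's theorem, and one may alternatively organise it as an induction on $\nn$ via a fibration forgetting one circle, paralleling Artin's combing proof of Theorem~\ref{artin_theorem}.
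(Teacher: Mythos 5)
First, a point of order: the paper does not prove this theorem at all --- it is imported verbatim from Goldsmith (Theorem 5.3), who published Dahm's thesis argument. So there is no internal proof to compare your attempt against, and any evaluation has to be of your argument on its own terms.

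On those terms, the first two thirds of your proposal are in reasonable shape. The computation of $\pn(\Sigma_i)=\mathcal{S}_i^1$, $\pn(\Rho_i)=\mathcal{R}_i^1$, $\pn(\Tau_i)=\mathcal{T}_i^1$, the meridian argument showing the image lands in the signed permutation-conjugacy subgroup, and the splicing of the signed permutation quotient with the pure basis-conjugating kernel (generated by the elementary conjugations $\alpha_{kl}$, with $\mathcal{R}_i^1\circ\mathcal{S}_i^1$ correctly identified as one of them) are all sound, modulo two things you should make explicit: that the set of automorphisms $x_\ii\mapsto a_\ii\inv x_{\alpha(\ii)}^{\pm1}a_\ii$ is actually closed under composition and inversion, and that the generation of the pure part by the $\alpha_{kl}$ is itself a nontrivial theorem (McCool's peak-reduction argument), not a formality --- citing it is legitimate, but it carries real weight.

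The genuine gap is injectivity, which you correctly identify as the deep assertion but do not prove. Your sketch ends by observing that the residual step --- that a mapping class acting trivially on $\pi_1(D^3\setminus\C,\ast)$ is trivial in $\MCG(D^3,\C)$ --- ``is exactly the content of Dahm's theorem''; but that is the statement being proven, so as written the argument is circular. The two escape routes you gesture at are both problematic as stated: Waldhausen rigidity does not apply to the pieces obtained by cutting along the balloons (the outer piece is a ball with $n$ balls removed, still not aspherical, and one must separately control sphere twists and the ambiguity in the choice of sphere system), and there is no straightforward Fadell--Neuwirth-type fibration for spaces of trivial links (the homotopy type of the space of unlinks is itself a hard theorem of Brendle--Hatcher). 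A complete proof here would have to reproduce Dahm's actual induction on the number of components for motion groups, which is substantial and absent from the proposal.
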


This case
of the map $\pn$ is known in the literature as \emph{Dahm's homomorphism}.
The 
generators of $\LBE\nn$ of type~$\sigma_\ii$, $\rho_\ii$ and~$\tau_\ii$ are 
respectively sent by Dahm's homomorphisms to automorphisms~$\mathcal{S}_\ii^1$,
$\mathcal{R}_\ii^1$ and~$\mathcal{T}_\ii^1$.

\begin{Remark}  
\label{R:consequences_on_loop_braids}
A consequence of Theorem~\ref{gold-theorem} is that 
$\LB\nn$ injects into~$\Aut(F_\nn)$ and its image is isomorphic to the group of  
automorphisms of the form~$ x_\ii \mapsto a_\ii\inv x_{\alpha({\ii})}  a_\ii$,
where $\alpha$ is a permutation and~$a_\ii\in F_n$.
This 
group is called \emph{the group of {basis} conjugating automorphisms} of~$\Aut(F_\nn)$, and is 
generated by the automorphisms $\{\mathcal{S}_\ii^1 \mid i=1, \dots \nn-1\}$ and~$\{\mathcal{R}_\ii^1 \mid i=1, \dots \nn-1\}$.
\end{Remark}

From now on we will  focus on the extended loop braid group, keeping in mind 
that through Remark~\ref{R:consequences_on_loop_braids} consequences of this 
work can be drawn on loop braid groups. 

Dahm's homomorphism can be constructed with the same classical construction of 
Artin's representation in terms of mapping classes. 
Briefly, choose a base-point $\ast$ in the boundary of~$D^3$. Isotopies of maps 
$f\colon(D^3,\C) \to (D^3,\C)$ do not move the base-point. If $[f] \in \LBE\nn$  
then $[f]$ yields a pointed-homotopy class
$\Theta([f]) \colon (D^3 \setminus 
\C,\ast)\to (D^3 \setminus \C,\ast)$. Let 
\begin{equation}
\label{deftheta'}
\theta'([f])\colon \pi_1(D^3 \setminus \C,\ast)   
\longrightarrow \pi_1(D^3 \setminus \C,\ast)      
\end{equation}
be the induced map on fundamental groups.          
Algebraic topological considerations give that~$\pn([f])$ coincides with~$\theta'([f])$.

\subsection{The category of \texorpdfstring{\agg s}{pi-modules}}
\label{SS:aaagg}

\begin{Definition}
\label{D:agg}
 \Anagg\
  is given by a triple~$\G=(G,A,\trr)$, where $G$ is a group, $A$ is an 
abelian group, and $\trr$ is a left-action by automorphisms of $G$
on~$A$. Given 
two \aaagg s  $\G=(G,A,\trr)$ and~$\G'=(G',A',\trr')$, a morphism 
$f=(f^1,f^2)\colon \G \to \G'$ is a pair
$(f^1\colon G \to G', f^2\colon A \to 
A')$
of homomorphisms that preserve actions: for each $g \in G$ and~$a \in A$, 
we have~$f^2(g \trr a)=f^1(g) \trr'  f^2(a)$.
\end{Definition}

Given two composable \aaagg\ morphisms,
$f=(f^1,f^2)$ and~$h = (h^1,h^2)$,
then our convention will be that~$(f^1,f^2) (h^1,h^2)=(f^1 
\circ h^1,f^2 \circ h^2)$.
It is easy to see that the set of \aaagg s and their morphisms form a
category.
We write $\aagg$ for the category of \aaagg s, since \agg s are abelian group--group pairs.
The category of \agg s was also used in \cite[Chapter I, \S 1 (1.7)]{Baues4D}. Confer also for example~\cite[\S8]{Auslander74},
and~\cite{Sieradski1993}
where they are termed ``second homotopy modules''. 

Consider the forgetful functor
${\UU}\colon\aagg \rightarrow {\SET\times \SET}$
that takes
$\G = (G,A,\triangleright)$ to the pair~$(G,A)$, made from the underlying sets of the groups $G$ and~$A$.
Recall $F_n = F^a(\{1,2,...,n\})$ from~\eqref{eq:deFn}:
the adjoint
$\FFa$
takes $(\{1,2,...,n\},\{1,2,...,n\})$ to  \anagg\ that we construct next.

\begin{Lemma}
A morphism $f=(f^1,f^2)\colon \G \to \G'$ of \aaagg s is invertible if and only if
both $f^1$ and $f^2$ are invertible homomorphisms.
\end{Lemma}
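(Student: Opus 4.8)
The plan is to prove the two implications separately: the forward direction is immediate from the componentwise composition law, while the reverse direction reduces to a single short check that the componentwise inverse respects the actions. First I would record that, under the stated convention $(f^1,f^2)(h^1,h^2)=(f^1 \circ h^1, f^2 \circ h^2)$, the identity morphism of $\G=(G,A,\trr)$ is the pair $(\id_G,\id_A)$. Hence a two-sided inverse of $f=(f^1,f^2)$ in $\aagg$ is precisely a morphism $h=(h^1,h^2)\colon \G' \to \G$ with $f^1 \circ h^1 = \id_{G'}$, $h^1 \circ f^1 = \id_G$, and likewise on the second components.

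For the forward direction, suppose $f$ is invertible with inverse $h=(h^1,h^2)$. Reading off the two components of $fh=(\id_{G'},\id_{A'})$ and $hf=(\id_G,\id_A)$ shows at once that $h^1$ is a two-sided inverse of $f^1$ and $h^2$ a two-sided inverse of $f^2$; so both $f^1$ and $f^2$ are invertible homomorphisms.

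For the reverse direction, assume $f^1$ and $f^2$ are invertible homomorphisms and set $h^1=(f^1)^{-1}$, $h^2=(f^2)^{-1}$. The set-theoretic inverse of a group isomorphism is again a group homomorphism, so $h^1$ and $h^2$ are homomorphisms. The one thing that genuinely needs checking---and the only real content of the lemma---is that the pair $(h^1,h^2)$ is itself a morphism of \agg s, i.e.\ that it preserves the actions. Given $g' \in G'$ and $a' \in A'$, put $g=h^1(g')$ and $a=h^2(a')$, so that $f^1(g)=g'$ and $f^2(a)=a'$. Applying the action-preserving identity for $f$ to $g$ and $a$ gives $f^2(g \trr a)=f^1(g) \trr' f^2(a)=g' \trr' a'$, and applying $h^2=(f^2)^{-1}$ to both ends yields $h^2(g' \trr' a')=g \trr a=h^1(g') \trr h^2(a')$, which is exactly the condition for $h$ to be a $\aagg$-morphism. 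Finally, since $f^1 \circ h^1=\id$, $h^1 \circ f^1=\id$, and similarly on the second components, the composites $fh$ and $hf$ equal the respective identity morphisms, so $h$ is a two-sided inverse of $f$ in $\aagg$.

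The main obstacle is essentially just the verification in the last paragraph that the componentwise inverse respects the action; everything else is formal bookkeeping with the composition convention and the fact that inverses of group isomorphisms are homomorphisms.
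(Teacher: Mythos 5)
Your proof is correct and follows the only natural route: the paper itself disposes of this lemma with the single line ``It follows from the definition of a morphism of \agg s,'' and your argument is precisely the spelled-out version of that, with the one substantive point (that the componentwise inverse preserves the actions) checked correctly.
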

\proof
It follows from the definition of a morphism of \aaagg s.
\endproof

We denote the group of invertible morphisms {$\G \to \G$} by~$\Aut(\G)$.

\begin{Definition} 
\label{D:M_n}
Let $\Z [G]$ denote the group ring of group~$G$.
Let $M_n^G$ be the free 
$\Z[G]$-module on the symbols~$\{K_1,\dots,K_n\}$, hence
\[
M_n^G  =\Z[G]\{K_1,\dots,K_n\}\cong\bigoplus_{i=1}^n \Z[G]{K_i}\cong 
\bigoplus_{g \in G}\bigoplus_{i=1}^n \Z(g,K_i)
\]
equipped with the diagonal action of~$\Z[G]$.
We define $\M_n^G$ to be the \agg~$(G,M_n^G,\trr)$.
The action of $G$ on $M_n^G$ is induced by the $\Z[G]$-module structure.
Define
\[
M_n = M_n^{F_n}, \hspace{1cm} 
\M_n^m = \M_n^{F_m} \hspace{1cm} \mbox{and} \hspace{1cm}
\M_n = \M_n^n . 
\]
\end{Definition}

The following proposition describes how morphisms $\M_n \to \G$ can be uniquely 
specified by their value on the generators $x_i$ and~$K_i$, for~$i=1, \dots, \nn$. This result is also used, implicitly, in~\cite[\S 4.5.1]{loopy}.

\newcommand{\btrr}{\blacktriangleright} 

\begin{Proposition}
\label{extfromgens}
Let $\G=(G,A,\btrr)$ be a \aaagg. There is a canonical one-to-one correspondence 
between morphisms  $f=(f^1,f^2) \colon \M_n^m \to \G$ and pairs of tuples 
$(g_1,\dots,g_m) \in G^m$ and~$(a_1,\dots,a_n) \in A^n$. The correspondence is 
such that:
$g_i=f^1(x_i)$ and~$a_i=f^2(K_i)$. 
\end{Proposition}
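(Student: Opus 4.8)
The plan is to show that the assignment
\[
f = (f^1, f^2) \longmapsto \big((f^1(x_1), \dots, f^1(x_m)),\ (f^2(K_1), \dots, f^2(K_n))\big)
\]
is a bijection onto $G^m \times A^n$. This is precisely the statement that $\M_n^m$ is the free \agg\ on the pair of sets $(\{x_1,\dots,x_m\},\{K_1,\dots,K_n\})$, that is, the value $\FFa(\{1,\dots,m\},\{1,\dots,n\})$ of the left adjoint to $\UU$; but since that adjunction is itself what the proposition provides, I would argue injectivity and surjectivity of the assignment directly, using the explicit description of $M_n^{F_m}$ from Definition~\ref{D:M_n}.

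For \textbf{injectivity}, suppose $f=(f^1,f^2)$ and $f'=(f'^1,f'^2)$ induce the same tuples. Since $F_m$ is free on $\{x_1,\dots,x_m\}$, the equalities $f^1(x_i)=f'^1(x_i)$ force $f^1=f'^1$. For the second components I would invoke the description in Definition~\ref{D:M_n}: as an abelian group, $M_n^{F_m}$ is free on the set $\{p \trr K_i : p \in F_m,\ 1\le i\le n\}$. On such a generator the action-preservation axiom of a \agg\ morphism gives $f^2(p \trr K_i)=f^1(p) \btrr f^2(K_i)$, which is already determined by $f^1$ and by the values $f^2(K_i)=a_i$; hence $f^2$ and $f'^2$ agree on an additive generating set and therefore coincide.

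For \textbf{existence}, given tuples $(g_1,\dots,g_m)\in G^m$ and $(a_1,\dots,a_n)\in A^n$, I would first use freeness of $F_m$ to define the group homomorphism $f^1\colon F_m \to G$ with $f^1(x_i)=g_i$. I would then define $f^2\colon M_n^{F_m}\to A$ by setting $f^2(p \trr K_i) := f^1(p) \btrr a_i$ on the free abelian basis $\{p \trr K_i\}$ and extending $\Z$-linearly; this is well defined exactly because $M_n^{F_m}$ is free abelian on that set. It then remains to check that $(f^1,f^2)$ preserves the actions, and by additivity of $f^2$ and of each automorphism $q \btrr (-)$ of $A$ it suffices to check this on the basis elements $K=p\trr K_i$:
\[
f^2\big(q \trr (p \trr K_i)\big) = f^2\big((qp)\trr K_i\big) = f^1(qp) \btrr a_i = f^1(q) \btrr \big(f^1(p) \btrr a_i\big) = f^1(q) \btrr f^2(p \trr K_i),
\]
using in turn that $\trr$ is a left action on $M_n^{F_m}$, that $f^1$ is a homomorphism, and that $\btrr$ is a left action of $G$ on $A$. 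Finally $f^2(K_i)=f^2(e\trr K_i)=f^1(e)\btrr a_i=a_i$, so the morphism has the prescribed values on generators.

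The argument is essentially bookkeeping, and I expect no serious obstacle. The single point that genuinely uses the hypotheses — and so deserves care — is the well-definedness of $f^2$ together with the action-preservation check: both rest on the identification of $M_n^{F_m}$ as the free abelian group on $\{p\trr K_i\}$ (equivalently, the free $\Z[F_m]$-module on $\{K_1,\dots,K_n\}$) from Definition~\ref{D:M_n}, and on the reduction of action-preservation to the left-action identity $f^1(qp)\btrr a_i = f^1(q)\btrr\big(f^1(p)\btrr a_i\big)$ displayed above.
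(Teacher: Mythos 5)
Your proof is correct and follows essentially the same route as the paper: define $f^1$ by freeness of $F_m$, define $f^2$ on the free abelian basis $\{p\trr K_i\}$ by $f^2(p\trr K_i)=f^1(p)\btrr a_i$, and observe that action-preservation holds by construction. You merely spell out the uniqueness and the action-preservation check that the paper leaves implicit.
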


\begin{proof}
Let us consider a pair of tuples $(g_1,\dots,g_m) \in G^m$ and~$(a_1,\dots,a_n) \in A^n$. 
We associate to this pair a morphism $f=(f^1,f^2)$ as follows. 
We take $f^1$ to be the unique homomorphism $F_m \to G$ such that~$f^1(x_i)=g_i$.
As an abelian group, $M_n^m$ is free on the set of pairs $(g,K_i)$, where $g \in 
F_m$ and $i=1, \dots, n$. We define $f^2\colon M_n^m \to A$ to be the unique 
homomorphism such that~$f^2(g,K_i)= f^1(g) \btrr a_i$.
Compatibility with group 
actions is preserved by construction.
\end{proof}

\begin{Remark}
  Because of Proposition~\ref{extfromgens}, we say that
$\M_n^m$, and in particular~$\M_n$,  is 
\anagg\ of 
\emph{free type}.
Indeed consider 
the forgetful functor 
$\UU\colon \aagg \rightarrow {\SET\times \SET}$ that takes
$\G = (G,A,\triangleright)$ to~$(G,A)$.
This functor has a left adjoint~$\FFa$, where given a pair $(X,Y)$ of sets,~$\FFa(X,Y)=(F(X),\Z[F(X)]Y,\trr)$, where
$F(X)$ is the free group on~$X$, and $\Z[F(X)]Y$ is the free $\Z[F(X)]$-module on~$Y$, with the obvious action of~$F(X)$. Hence:
\[
\M_n^m \cong \FFa \redx{\big(\{1,\dots,m \},\{1,\dots,n\}\big)} .
\]
We say ``of free type'' instead of ``free'',
as $\M_n$ does not arise from a 
left adjoint of a forgetful functor from
$\aagg$  
to~$\SET$.
\end{Remark}

We observed in Section~\ref{SS:towards higher}
that $\pi_1(D^2\setminus d_n , \ast)$ is a free group~$F^a( \{ x_1,...,x_n \})$.
As we will establish in Lemma~\ref{ultimate_freeness}~\textit{et seq.},
we have~$\pi_{{(1,2)}}(D^3\setminus L_2, \ast) \cong \FFa\big({\{1,2\},\{1,2\}}\big)$.
Then the automorphism of the \aaagg\
induced by a homeomorphism of~$(D^3, L_2)$, relative to the boundary,
is prescribed by giving:
\begin{enumerate}[label=(\roman*)]
\item the effect on the $x_i$ generators of~{$\pi_1(D^3\setminus L_2, \ast)$};
\item  the effect on the $K_i$ generators of {$\pi_2(D^3\setminus L_2, \ast)$} as a ${\Z[\pi_1]}$-module.
\end{enumerate}
In particular we have the following.

\begin{Lemma} \label{L:Two_circles}
Recall the elementary braid permutation
$\Sigma_1\in \MCG(D^3,L_2)$
illustrated in Figure~\ref{F:loopin0} for~$n=2$.
  \redx{The image $\ssigma$ of $\Sigma_1\in \MCG(D^3,L_2)$
 in 
 $\Aut(\pi_{(1,2)}(D^3\setminus L_2,\ast))$ is given by:}
\[
\ssigma_{} (x_2) = x_2^{-1} x_1 x_2, \quad {\ssigma_{} (x_1) = x_2,}
\]
\begin{equation} \label{eq:ssigmaK2}
\ssigma_{} (K_2) = x_2^{-1} \trr K_1
\end{equation}
The image
of $K_1$ is determined
by these since  $\ssigma_{}$ fixes~$K_1 + K_2$.
\end{Lemma}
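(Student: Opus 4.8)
The plan is to invoke Proposition~\ref{extfromgens}: since $\ssigma$ is a morphism of \aaagg s, it is completely determined by the four values $\ssigma(x_1),\ssigma(x_2),\ssigma(K_1),\ssigma(K_2)$, and it is automatically $\pi_1$-equivariant. So it suffices to compute these values, and I would organise the work into the $\pi_1$-level (which is essentially already known) and the $\pi_2$-level (the genuinely new content), using the boundary to cut the $\pi_2$ computation down to a single balloon.

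For the action on $\pi_1$, recall that the elementary braid permutation $\Sigma_1\in\MCG(D^3,L_2)$ is the image under the rotation homomorphism $r\colon\MCG(D^2,d_2)\to\MCG(D^3,L_2)$ of the mapping class exchanging $p_1$ and $p_2$, and that the generators $x_i$ have representatives lying in the original rotated copy of $D^2$. Hence the induced map on $\pi_1(D^3\setminus L_2,\ast)\cong F_2$ equals $\theta'(\Sigma_1)$ of Equation~\eqref{deftheta'}, which by Theorem~\ref{gold-theorem} is the Artin automorphism $\mathcal{S}_1^1$ of Equation~\eqref{defofS1A}. This gives $\ssigma(x_1)=x_2$ and $\ssigma(x_2)=x_2^{-1}x_1x_2$ at once. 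In particular the underlying permutation is $(1\,2)$, so any representative homeomorphism $\phi$ of $\Sigma_1$ swaps the circles setwise; reading off that $\ssigma(x_2)$ is a conjugate of $x_1$, we get $\phi(C_2)=C_1$.

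For the action on $\pi_2$ I would first use the boundary. Since $K_1+K_2$ is represented by a map $(D^2,\partial D^2)\to(\partial D^3,\ast)$ and every representative $\phi$ of $\Sigma_1$ fixes $\partial D^3$ pointwise, $\phi$ fixes this class, so $\ssigma(K_1+K_2)=K_1+K_2$. This reduces everything to computing $\ssigma(K_2)$, after which $\ssigma(K_1)=K_1+K_2-\ssigma(K_2)$. I would then compute $\ssigma(K_2)$ geometrically from the coaxial representative of Figure~\ref{F:loopin0}: the balloon $b_2$ enclosing the bottom circle $C_2$ is carried by $\phi$ to a sphere enclosing $\phi(C_2)=C_1$, so $\ssigma(K_2)=\epsilon\,(w\trr K_1)$ for a sign $\epsilon\in\{\pm1\}$ and a hoop $w\in F_2$. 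The sign is $+1$ because the orientation-preserving exchange carries the exterior-normal orientation of $b_2$ to that of $\phi(b_2)$ without flipping the circle; and the hoop is $w=x_2^{-1}$, obtained by comparing the transported connecting path $\phi(\gamma_2)$ with $\gamma_1$. Heuristically this is the very same $x_2^{-1}$ that conjugates $x_1$ in $\ssigma(x_2)$, since the small loop and the balloon around $C_2$ are dragged along a common path during the braiding. This yields Equation~\eqref{eq:ssigmaK2}.

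The hard part is exactly this last identification of the sign and of the hoop $x_2^{-1}$ from the $3$-dimensional isotopy, as it requires tracking both the exterior normal and the basepoint-connecting path through the motion in which one circle passes the other. This is also why I would track the bottom balloon $K_2$, whose circle sweeps cleanly to the top, rather than $K_1$: the direct tracking of $\phi(b_1)$ is considerably messier, and the value $\ssigma(K_1)=K_1+K_2-x_2^{-1}\trr K_1$ is instead read off algebraically from the boundary constraint. It is worth noting that this genuinely differs from the naive guess $\ssigma(K_1)=K_2$, which the constraint rules out since $x_2\trr K_1\neq K_1$ in the free module; so the ``same conjugator'' heuristic is reliable for $K_2$ but fails for $K_1$, confirming that the boundary reduction is essential.
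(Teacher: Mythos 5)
Your proposal is correct and follows essentially the same route as the paper: the action on $\pi_1$ is read off from Dahm's homomorphism, $\ssigma(K_2)=x_2^{-1}\trr K_1$ is obtained by geometrically tracking the balloon around $C_2$ through the exchange, and $\ssigma(K_1)$ is recovered algebraically from the invariance of $K_1+K_2$ --- the paper justifies that invariance via the representatives in Figure~\ref{fig:K1K2} (equivalently the boundary sphere of Remark~\ref{conservation_flux_reasons}), exactly as you do. The only difference is one of detail: the paper's proof is a short sketch deferring to figures and to the proof of Theorem~\ref{main2}, whereas you make the sign and the hoop $x_2^{-1}$ determination explicit.
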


\begin{Remark} This result, and proof, is also used in~\cite[\S 4.5.1]{loopy}, where a dual form of the lifted Artin representation was treated, in the context of biracks derived from \agg s.
\end{Remark}
\proof {(Sketch)}
A  proof can be deduced from the observation of Figures~\ref{fig:OO1x} and~\ref{F:flips}.
It only remains to verify the last claim.
Consider the representatives of~$K_1 + K_2$ sketched in Figure~\ref{fig:K1K2}. The last one is evidently not moved by~{$\Sigma_1$}.
We will go back to these ideas later in Section~\ref{S:topological_interpretation}.
\endproof

\begin{figure}[!ht]
\centering
\includegraphics[scale=0.7]{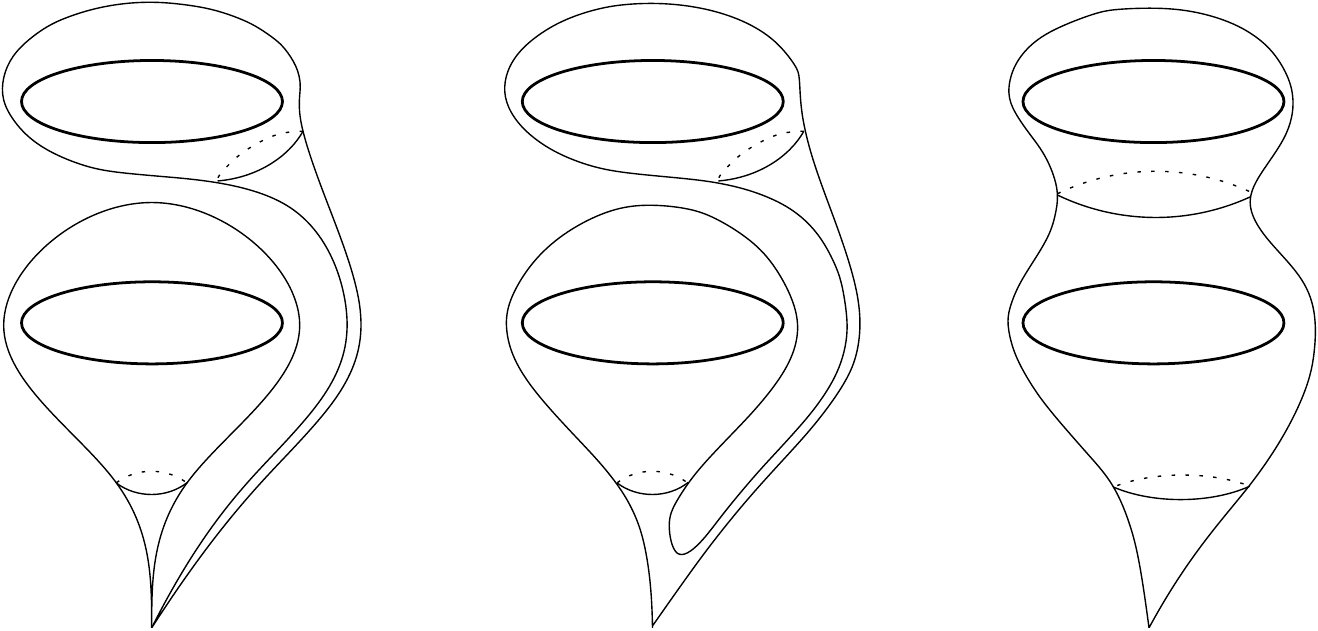}
\caption{Representatives of the element~$K_1 + K_2$.}
\label{fig:K1K2}
\end{figure}

Let us now treat this construction from a more algebraic viewpoint.
\begin{Definition} \label{de:Mmoncat}
  Define $\M$ as the monoidal category with object class $\N_0$ and
  $\M(n,n)= \aagg(\M_n, \M_n)$ with category composition as in~$\aagg$,
  and $\M(n,m)$ empty otherwise;
  and monoidal composition given by $a\otimes b = a+b$ on
  objects, and on morphisms as induced by the map
  $\{ 1, \ldots , n \} \sqcup \{ 1, \ldots m \} \mapsto
  \{ 1 , \ldots , n, {1+n}, \ldots , {m+n} \}$
  applied ``diagonally'' to the indices on pairs~$x_i, K_i$.
  Note that the permutation $(12)$ on $\{ 1,2 \}$ extends to make this
  a symmetric monoidal category.
\end{Definition}

\begin{Definition}
Recall Proposition \ref{extfromgens}.
Define $\mathcal{S}, \mathcal{R} \in \aagg(\M_2, \M_2)$ and  $\mathcal{T} \in \aagg(\M_1, \M_1)$
to be the uniquely defined morphisms of \agg s such that:
\begin{subequations}
\begin{align} 
\label{defofS1}
 \mathcal{S}^1_{}(x_j)&=
 \begin{cases} 
	x_{2}, \textrm{ if } j=1,\\
    x_{2}^{-1} x_1 x_{2}, \textrm{ if } j=2,\\
\end{cases}\\
\label{defofS2}
 \mathcal{S}^2_{}(K_j)&=
 \begin{cases} 
	K_{1}+K_{2}-x_{2}^{-1} \trr K_1 , \textrm{ if } j=1,\\
	x_{2}^{-1} \trr K_1 , \textrm{ if } j=2,\\
 \end{cases}
\end{align}
\end{subequations}

\begin{subequations}
\begin{align} 
\label{defofR1}
 \mathcal{R}^1_{}(x_j)&=
 \begin{cases} 
    x_{2}, \textrm{ if } j=1,\\
    x_1 , \textrm{ if } j=2,\\
 \end{cases}\\
\label{defofR2}
 \mathcal{R}^2_{}(K_j)&=
 \begin{cases} 
    K_{2} , \textrm{ if } j=1,\\
    K_1 , \textrm{ if } j=2,\\
 \end{cases} 
\end{align}
\end{subequations}

\begin{subequations}
\begin{align} 
\label{defofT1}
 \mathcal{T}^1_{}(x_1)&=   x_{1}^{-1},
 \hspace{1cm}
 \mathcal{T}^2_{}(K_j) = K_j.
\end{align}
\end{subequations}

Define $\mathcal{S}_i ,
 \mathcal{T}_i 
 \in \aagg(\M_n , \M_n)$, for each $i=1,\dots,n-1$, and $\mathcal{R}_i\in \aagg(\M_n , \M_n)$, for each $i=1,\dots,n$, as:
\[
\mathcal{S}_i = 1^{i-1} \otimes \mathcal{S} \otimes 1^{n-i-1} ,
\hspace{1.8cm}
\mathcal{R}_i = 1^{i-1} \otimes \mathcal{R} \otimes 1^{n-i-1} ,
\hspace{1.8cm}
\mathcal{T}_i = 1^{i-1} \otimes \mathcal{T} \otimes 1^{n-i} .
\]
\end{Definition}

Explicitly:
\begin{subequations}
\begin{align} 
 \mathcal{S}^1_i(x_j)&=\begin{cases} x_j, \textrm{ if } j<i \textrm{ and } j>i+1,\\
                          x_{i+1}, \textrm{ if } j=i,\\
                          x_{i+1}^{-1} x_i x_{i+1}, \textrm{ if } j=i+1,\\
           \end{cases}\\
 \mathcal{S}^2_i(K_j)&=\begin{cases} K_j, \textrm{ if } j<i \textrm{ and } j>i+1,\\
                          K_{i}+K_{i+1}-x_{i+1}^{-1} \trr K_i , \textrm{ if } j=i,\\
                          x_{i+1}^{-1} \trr K_i , \textrm{ if } j=i+1;\\
           \end{cases}
\end{align}
\end{subequations}
\begin{subequations}
\begin{align} 
 \mathcal{R}^1_i(x_j)&=\begin{cases} x_j, \textrm{ if } j<i \textrm{ and } j>i+1,\\
                          x_{i+1}, \textrm{ if } j=i,\\
                           x_i , \textrm{ if } j=i+1,\\
           \end{cases}\\
 \mathcal{R}^2_i(K_j)&=\begin{cases} K_j, \textrm{ if } j<i \textrm{ and } j>i+1,\\
                         K_{i+1} , \textrm{ if } j=i,\\
                          K_i , \textrm{ if } j=i+1;\\
           \end{cases} 
\end{align}
\end{subequations}
\begin{subequations}
\begin{align}
 \mathcal{T}^1_i(x_j)&=\begin{cases} x_j, \textrm{ if } j\neq i, \\
                          x_{i}^{-1}, \textrm{ if } j=i,
           \end{cases}\qquad \textrm{ and }
&& \mathcal{T}^2_i(K_j)= K_j.\end{align}
\end{subequations}

Remark that $\mathcal{T}_i^2: M_n \rightarrow M_n$ is not the identity. For
example $\mathcal{T}_i^2(x_i \trr K_j)= x_i^{-1} \trr K_j$.

\begin{Lemma}
These 
homomorphisms of \agg s
 are invertible.
\end{Lemma}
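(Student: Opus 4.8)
The plan is to invoke the preceding lemma, which reduces invertibility of a $\pi$-module morphism $f=(f^1,f^2)$ to the simultaneous invertibility of the group homomorphism $f^1$ and the abelian group homomorphism $f^2$. By Definition~\ref{de:Mmoncat} each $\mathcal{S}_i$, $\mathcal{R}_i$, $\mathcal{T}_i$ is obtained from $\mathcal{S}$, $\mathcal{R}$, $\mathcal{T}$ as a monoidal product with identity morphisms; since the monoidal product of isomorphisms is again an isomorphism, it suffices to treat the three generators $\mathcal{S},\mathcal{R}\in\aagg(\M_2,\M_2)$ and $\mathcal{T}\in\aagg(\M_1,\M_1)$. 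Throughout I would lean on Proposition~\ref{extfromgens}: a morphism out of $\M_n^m$ is freely and uniquely specified by its values on the generators $x_i$ and $K_i$, and two such morphisms coincide as soon as they agree on these generators. This lets me both \emph{define} candidate inverses by their effect on generators and \emph{verify} the inverse relations by checking them only on generators.

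For $\mathcal{R}$ and $\mathcal{T}$ I would simply show that each is its own inverse. On first components, $\mathcal{R}^1$ is the transposition $x_1\leftrightarrow x_2$ and $\mathcal{T}^1\colon x_1\mapsto x_1^{-1}$, both visibly involutions; equivalently they are the automorphisms $\mathcal{R}^1_1$ and $\mathcal{T}^1_1$ appearing in Theorem~\ref{gold-theorem}. On second components, $\mathcal{R}^2$ swaps $K_1\leftrightarrow K_2$ and $\mathcal{T}^2$ fixes each $K_j$; although these are only semilinear (that is, $f^2(g\trr a)=f^1(g)\trr f^2(a)$), squaring them returns the identity on generators precisely because the corresponding first components square to the identity, so $\mathcal{R}^2\circ\mathcal{R}^2=\id$ and $\mathcal{T}^2\circ\mathcal{T}^2=\id$. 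Hence $\mathcal{R}$ and $\mathcal{T}$ are involutions in $\aagg$, and in particular invertible.

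The substantive case is $\mathcal{S}$, for which I would write down an explicit inverse. For the first component, $\mathcal{S}^1=\mathcal{S}^1_1$ is an automorphism of $F_2$ by Theorem~\ref{gold-theorem}, its inverse being $x_1\mapsto x_1 x_2 x_1^{-1}$, $x_2\mapsto x_1$. Guided by the fact, recorded in Lemma~\ref{L:Two_circles}, that $\mathcal{S}$ fixes $K_1+K_2$, I would define a morphism $\mathcal{S}'\in\aagg(\M_2,\M_2)$ by $(\mathcal{S}')^1\colon x_1\mapsto x_1 x_2 x_1^{-1},\ x_2\mapsto x_1$ and $(\mathcal{S}')^2\colon K_1\mapsto x_1\trr K_2,\ K_2\mapsto K_1+K_2-x_1\trr K_2$, which is a legitimate morphism by Proposition~\ref{extfromgens}. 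It then remains to check $\mathcal{S}\circ\mathcal{S}'=\mathcal{S}'\circ\mathcal{S}=\id$ on the four generators $x_1,x_2,K_1,K_2$: the first components reduce to the already-verified inverse of $\mathcal{S}^1$, while for the second components one uses semilinearity, for instance $\mathcal{S}^2((\mathcal{S}')^2(K_1))=\mathcal{S}^2(x_1\trr K_2)=\mathcal{S}^1(x_1)\trr\mathcal{S}^2(K_2)=x_2\trr(x_2^{-1}\trr K_1)=K_1$. I expect the only real obstacle to be bookkeeping: because $\mathcal{S}^2$ is semilinear rather than $\Z[F_2]$-linear, when composing one must consistently push the first-component automorphism through the action $\trr$, and it is this twist --- rather than any genuine difficulty --- that makes the verification for $\mathcal{S}^2$ the single step warranting care.
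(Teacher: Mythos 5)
Your proposal is correct and follows essentially the same route as the paper: $\mathcal{R}$ and $\mathcal{T}$ are handled as involutions, and your candidate inverse for $\mathcal{S}$ is exactly the paper's $\overline{\mathcal{S}_i}$ (specialised to $i=1$), verified on generators via Proposition~\ref{extfromgens} and the semilinearity $f^2(g\trr a)=f^1(g)\trr f^2(a)$. The only (harmless) presentational difference is that you first reduce to the rank-two generators through the monoidal product, whereas the paper states the inverse of $\mathcal{S}_i$ for general $n$ directly.
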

\begin{proof}
The claim is true for  
  $\mathcal{R}_i$ and the~$\mathcal{T}_j$, because 
$\mathcal{R}_i^2=\id$ and~$\mathcal{T}_j^2=\id$. 
Explicit calculation shows that the inverse of 
$\mathcal{S}_i$ is given by $\overline{\mathcal{S}_i}=(\overline{\mathcal{S}_i}^1,\overline{\mathcal{S}_i}^2)$ below:
\begin{subequations}
\begin{align} 
\label{defofS1m}
 \overline{\mathcal{S}}^1_i(x_j)&=
 \begin{cases} 
    x_i x_{i+1} x_i^{-1}, \textrm{ if } j=i,\\
    x_i,  \textrm{ if } j=i+1,\\
	x_j, \textrm{ if } j \neq i, i+1,\\
 \end{cases}\\
\label{defofS2m}
 \overline{\mathcal{S}}^2_i(K_j)&=
 \begin{cases} 
	x_{i}  \trr K_{i+1}  , \textrm{ if } j=i,\\
	K_i+K_{i+1}-  x_{i}  \trr K_{i+1} , \textrm{ if } j=i+1,\\
	K_j, \textrm{ if } j \neq i, i+1.\\
 \end{cases}
\end{align}
\end{subequations}
For the geometrical idea behind these calculations, see Lemma~\ref{L:Two_circles}.
\end{proof}

The following is the first main result of this paper.

\begin{Theorem}
\label{lfa}
\label{main1}
There exists a unique group homomorphism $\Pn\colon \LBE\nn \to \Aut(\M_n)$, such that:
\[
\Sigma_i \mapsto \mathcal{S}_i,  \quad \Rho_i \mapsto \mathcal{R}_i,  \quad \Rho_{{j}}\mapsto \mathcal{T}_j,
\]
where~$i=1,\dots, n-1$,  $j=1,\dots, n$, and~$\Sigma_i$, $\Rho_i$ and $\Tau_j$ are the generators of $\LBE\nn$ as a mapping class group, as in Figure~\ref{F:flips} . 
Furthermore, $\Pn\colon \LBE\nn \to \Aut(\M_n)$ is injective.
\end{Theorem}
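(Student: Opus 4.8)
The plan is to establish existence, uniqueness, and injectivity in turn, using the finite presentation of $\LBE\nn$ recorded in Theorem~\ref{gensandrelsforLBG}. Uniqueness is immediate: the elements $\Sigma_i, \Rho_i, \Tau_j$ generate $\LBE\nn$, so any homomorphism out of $\LBE\nn$ is determined by its values on them. For existence I would invoke the universal property of a group presentation (von Dyck's theorem): since $\mathcal{S}_i, \mathcal{R}_i, \mathcal{T}_j$ are invertible morphisms of \agg s, and hence genuine elements of $\Aut(\M_n)$ (as shown above), it suffices to verify that the assignment $\sigma_i \mapsto \mathcal{S}_i$, $\rho_i \mapsto \mathcal{R}_i$, $\tau_j \mapsto \mathcal{T}_j$ carries every defining relation of Theorem~\ref{gensandrelsforLBG} to a valid identity in $\Aut(\M_n)$.

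Each such relation is an equality of automorphisms of $\M_n$, and by Proposition~\ref{extfromgens} it is enough to check it on the generators $x_1,\dots,x_n$ of the group part and $K_1,\dots,K_n$ of the module part. On the $x_i$ the computation is exactly the one underlying Dahm's homomorphism: the first components $\mathcal{S}_i^1, \mathcal{R}_i^1, \mathcal{T}_i^1$ coincide with the automorphisms of $F_n$ in Theorem~\ref{gold-theorem}, which already satisfy all the relations because $\pn$ is a homomorphism. Hence the genuinely new content is the verification on the $K_i$, that is, on the second (module) components. Here one must track the twisted $\Z[F_n]$-module structure carefully: a morphism of \agg s satisfies $f^2(g \trr a) = f^1(g) \trr f^2(a)$, so, for instance, $\mathcal{T}_i^2$ fixes each symbol $K_j$ yet acts nontrivially on $x_i \trr K_j$. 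I expect the mixed relations to be the main obstacle --- in particular the braid relation $\sigma_i \sigma_{i+1}\sigma_i = \sigma_{i+1}\sigma_i\sigma_{i+1}$ evaluated on the $K_i$, and the relation~\eqref{eLBG7}, $\tau_{i+1}\sigma_i = \rho_i \sigma_i^{-1}\rho_i \tau_i$, which interleaves all three families of generators. These are precisely where the conjugating group elements carried by the module action interact and must cancel correctly. All of them reduce to finite $\Z[F_n]$-linear computations localised at the indices $i, i+1$ (or $i$), so they are routine, if lengthy.

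For injectivity I would factor $\Pn$ through Dahm's homomorphism. Taking the first component defines a map
\[
\Phi \colon \Aut(\M_n) \longrightarrow \Aut(F_n), \qquad (f^1, f^2) \mapsto f^1,
\]
which is a group homomorphism because composition in $\aagg$ is componentwise, $(f^1,f^2)(h^1,h^2) = (f^1 \circ h^1, f^2 \circ h^2)$, matching the convention $fg = f\circ g$ in $\Aut(F_n)$. By construction the first components of $\mathcal{S}_i, \mathcal{R}_i, \mathcal{T}_j$ are precisely the automorphisms $\mathcal{S}_i^1, \mathcal{R}_i^1, \mathcal{T}_j^1$ that $\pn$ assigns to $\sigma_i, \rho_i, \tau_j$ in Theorem~\ref{gold-theorem}. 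Thus $\Phi \circ \Pn$ and $\pn$ agree on the generators of $\LBE\nn$, whence $\Phi \circ \Pn = \pn$. Since $\pn$ is injective by Theorem~\ref{gold-theorem}, so is $\Pn$. In short, injectivity comes essentially for free once existence is in hand; the real work is confined to the relation checks on the module generators $K_i$.
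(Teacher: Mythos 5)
Your proposal is correct and follows essentially the same route as the paper: existence via the presentation of $\LBE\nn$ in Theorem~\ref{gensandrelsforLBG} with the relations checked on the module generators $K_i$ (the first components already being Dahm's automorphisms), and injectivity by post-composing with the projection $(f^1,f^2)\mapsto f^1$ onto $\Aut(F_n)$ to recover Dahm's homomorphism. The only difference is that the paper actually carries out the $\Z[F_n]$-linear relation checks explicitly (in Appendix~\ref{A:calculations}, for $n=3$), including exactly the mixed relations such as \eqref{eLBG7} that you correctly single out as the substantive ones, whereas you assert them as routine without executing them.
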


  To $\Pn\colon  \LBE\nn \to \Aut(\M_n)$, we call the \textit{\TAR}.
\begin{proof}
  These images are sufficient to determine a homomorphism by Theorem~\ref{gensandrelsforLBG}, and the comments just after.
In Appendix~\ref{A:calculations} we check relations by explicit calculations.

 Let $\G=(G,A,\trr)$ be {a} \aaagg. Then we have a morphism~$F^1\colon \Aut(\G) \to 
 \Aut(G)$, such that~$F^1(f^1,f^2)=f^1$. 
 The injectivity of $\Pn$ follows from the fact that $F^1\circ \Pn$ coincides 
 with Dahm's homomorphism~$\LBE\nn \to \Aut(F_n)$, see 
 Theorem~\ref{gold-theorem}.
 \end{proof} 

A feature of Artin's representation,
is that it provides a characterisation of \emph{braid automorphisms}
of~$F_\nn$,
as  recalled in Theorem~\ref{artin_theorem}.
Also in the case of extended loop braid groups, Goldsmith gives a characterisation elements of~$\Aut(F_\nn)$ that are images of elements of~$\LBE\nn$, Theorem~\ref{gold-theorem}.
It is thus natural to ask the following question. 

\begin{OpenP} 
Determine the image of $\Pn\colon \LBE\nn \to \Aut(\M_n)$. 
\end{OpenP}

\begin{Remark}
\label{conservation_flux} 
Note that for any $(f^1,f^2) \in {\Pn(\LBE\nn)}$ it holds that:
\begin{equation}
\label{conservation_flux_eq}
f^2(K_1+\dots + K_n)=K_1+\dots + K_n.
\end{equation}
This is an analogue for condition~\eqref{prod-of-gens} in 
Theorem~\ref{artin_theorem}. As we will see in Remark~\ref{conservation_flux_reasons},  Equation~\eqref{conservation_flux_eq}
holds since $K_1+K_2+\dots+K_n$ can be represented by a parametrisation~$(D^2,\d D^2) \to (\d D^3,\ast)$, and hence is left untouched by all elements of~$\MCG(D^3,\C)$, given that these mapping classes are the identity in~$\d D^2$.
\end{Remark}

\begin{Remark}{A dual version of the \TAR, which considers finite dimensional representations of the loop braid group defined from biracks derived from \agg s, appeared in~\cite{loopy}. The underlying invariants of welded knots were initially defined in~\cite{kauffman_faria_martins_2008}.}
\end{Remark}

\section{A topological interpretation of the \TAR}
\label{S:topological_interpretation}

Let $X$ be a path-connected space.
A path $\gamma\colon [0,1] \to X$ canonically induces isomorphisms~$\pi_2(X,\gamma(1)) \to \pi_2(X,\gamma(0))$, 
and also $\pi_1(X,\gamma(1)) \to \pi_1(X,\gamma(0))$; see e.g.~\cite[Page 341]{Hatcher}. 
Let $\ast \in X$ be a base-point. 
If we consider closed paths, starting and ending in $\ast$, 
this 
descends to an action of $\pi_1(X,\ast)$ on~$\pi_2(X,\ast)$, 
by automorphisms; see e.g.~\cite[page 342]{Hatcher}. We denote by $\pi_{(1,2)}(X,\ast)$ the
\aaagg\ obtained from~$\pi_1(X,\ast)$ 
acting on 
$\pi_2(X,\ast)$ in this way.
We call
$ \pi_{(1,2)}(X,\ast)$  the
\emph{fundamental \agg\ of}~$(X,\ast)$ (confer with~\cite[Chapter I, \S 1]{Baues4D}).
We let $\TOP_\ast$ be the
category of pointed topological spaces, and pointed maps.
Define also $\TOP_\ast /_{\cong}$  as the
category with object the pointed spaces, and morphisms $(X,\ast) \to (Y,\ast)$ to be pointed maps $(X,\ast) \to (Y,\ast)$, considered up to 
pointed homotopy.
The fundamental \agg\ extends to functors $\pi_{(1,2)}\colon \TOP_\ast \to \aagg$ and~$\pi_{(1,2)}\colon \TOP_\ast /_{\cong} \to \aagg$.
Given a pointed map $f\colon (X,\ast) \to (Y,\ast)$, the induced map of  \agg s  
is denoted by~$\pi_{(1,2)}(f)\colon 
\pi_{(1,2)}(X,\ast) \to \pi_{(1,2)}(Y,\ast)$.
This is independent
of the representative
of the pointed homotopy class of~$f$.
Given a  pointed homotopy class of  pointed maps~$[f]\colon (X,\ast) \to (Y,\ast)$, 
the induced morphism
in $\aagg$
is also denoted by~$\pi_{(1,2)}([f])\colon 
\pi_{(1,2)}(X,\ast) \to \pi_{(1,2)}(Y,\ast)$. Note that~$\pi_{(1,2)}([f]) =\pi_{(1,2)}(f)$.

\subsection{Some bouquets and their fundamental \texorpdfstring{\agg}{agg}s}

\begin{Definition}
  Fix, from now on, base points $\star \in S^1$ and $\star \in S^2$, hence $(S^1,\star)$ and $(S^2,\star)$ will be well-pointed. Let $n$ be a positive integer.
For $i=1,\dots, n$, let $S^1_i$ be a  
copy of the circle~$S^1\subset \R^2$, oriented counterclockwise, and 
$S^2_i$ a  
copy of the {2-sphere}~$S^2\subset \R^3$, oriented by the exterior normal.
We define the following pointed spaces,
with the obvious CW-decompositions, where $\star$ is the unique $0$-cell:
\[
(\H_n,{\star})=\bigvee_{i=1}^n \big((S^1_i,{\star})\vee (S^2_i,{\star})),
\quad\quad  (\F_n,{\star})= \bigvee_{i=1}^n  (S^1_i,{\star}),
\quad\quad  (\J_n,{\star})=\bigvee_{i=1}^n  (S^2_i,{\star}).
\]
\end{Definition}

Let $A_i\in\pi_2(\H_n,\star)$
be  given by the homotopy class of a positively oriented
characteristic map~$(D^2,\d D^2) \to (S^2_i,\star)\subset
(\H_n,\star)$, of the 2-cell $S^1_i$ of~$\H_n$,  for~$i=1,\dots,n$.
Let $x_i\in\pi_1(\H_n,\star)$
be  given by the homotopy class of a  positively oriented characteristic map $([0,1],\{0,1\}) \to (S^1_i,\star)\subset (\H_n,\star)$  of the 1-cell $S^1_i$ of~$\H_n$, for~$i=1,\dots,n$.

By the Seifert -- van Kampen theorem, the group $\pi_1(\H_n,\star)$
is free on the~$x_1,\dots,x_n \in\pi_1(\H_n,\star)$.
Hence  $\pi_1(\H_n,\star)\cong F_n$ in \eqref{eq:deFn}
canonically.
This is because, since spaces are well-pointed, we have canonical isomorphisms of groups (see e.g.~\cite[Theorem 1.20
and Example 1.21]{Hatcher}):
\begin{align*}
\pi_1(\H_n,\ast)&= \pi_1\Big ( \bigvee_{i=1}^n \big((S^1_i,{\star})\vee (S^2_i,{\star})\big) \Big) \cong 
\bigvee_{i=1}^n \pi_1\Big (  \big((S^1_i,{\star})\vee (S^2_i,{\star})\big) \Big)\\
&\cong \bigvee_{i=1}^n \big(\pi_1\big(S^1_i,{\star})\vee \pi_1(S^2_i,{\star})\big)\cong  \bigvee_{i=1}^n \big(\pi_1\big(S^1_i,{\star})\big)\cong \bigvee_{i=1}^n\F^a(\{x_i\})\cong F_n.
\end{align*}
In particular, the inclusion  $(\F_n,\star)\to (\H_n,\star)$ canonically induces an isomorphism of groups $F_n\cong \pi_1(\F_n,\star) \stackrel{\cong}{\to}  \pi_1(\H_n,\star)\cong F_n$.

We now determine $\pi_2(\H_n,\star)$ as a $\pi_1(\H_n,\star)$-module.
\begin{Lemma}
\label{ultimate_freeness} 
As an abelian group, $\pi_2(\H_n,\ast)$ is freely generated
by the~$g\trr A_i$,
where $g \in \pi_1(\H_n,\star)\cong \pi_1(\F_n,\star) \cong F_n$, and~$i=1\dots,n$. We have an isomorphism sending $g \trr A_i$ 
to~$(g,K_i)$:
\[
\pi_2(\H_n,\ast)\to M_n, 
\]
where we recall from Definition~\ref{D:M_n} that: 
\[
M_n=\Z[F_n]\{K_1,\dots,K_n\}\cong\bigoplus_{i=1}^n 
\Z[F_n]{K_i}\cong \bigoplus_{g \in F_n}\bigoplus_{i=1}^n \Z(g,K_i).
\]
In particular, we have a  canonical isomorphism of \agg s: 
\[
\pi_{(1,2)}(\H_n,\ast) \cong \M_n.
\]
\end{Lemma}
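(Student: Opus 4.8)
The plan is to compute $\pi_2(\H_n,\star)$ by passing to the universal cover and applying the Hurewicz theorem, and then to read off the $\Z[F_n]$-module structure from the deck transformation action. First I would use the CW-structure already fixed: $\H_n$ has a single $0$-cell $\star$, the $n$ one-cells of the circles $S^1_i$ (whose union is $\F_n$), and $n$ two-cells, one for each sphere $S^2_i$, each attached along the constant map $\partial D^2 \to \{\star\}$. Since the $\pi_1$-computation preceding the lemma gives $\pi_1(\H_n,\star)\cong F_n$ freely on $x_1,\dots,x_n$, the universal cover $\widetilde{\H_n}$ has as its $1$-skeleton the Cayley graph of $F_n$ with respect to $\{x_1,\dots,x_n\}$, which is a tree $T$ whose vertex set is $F_n$. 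Lifting the two-cells, over each vertex $g\in F_n$ of $T$ we attach one copy $S^2_{g,i}$ of each sphere $S^2_i$ by a constant map.

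Next, since $T$ is a tree it is contractible, so collapsing it gives a homotopy equivalence $\widetilde{\H_n}\simeq \bigvee_{g\in F_n}\bigvee_{i=1}^n S^2_{g,i}$; in particular $\widetilde{\H_n}$ is simply connected. Hence the covering projection induces an isomorphism $\pi_2(\widetilde{\H_n},\widetilde\star)\xrightarrow{\cong}\pi_2(\H_n,\star)$, while the Hurewicz theorem gives $\pi_2(\widetilde{\H_n},\widetilde\star)\cong H_2(\widetilde{\H_n})$. Since the reduced homology of a wedge is the direct sum of the reduced homologies of the summands, $H_2(\widetilde{\H_n})$ is free abelian on the fundamental classes $[S^2_{g,i}]$, with $g\in F_n$ and $i=1,\dots,n$.

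It then remains to identify the module structure. The group $F_n$ acts on $\widetilde{\H_n}$ by deck transformations, sending the vertex $g$ to $hg$ and hence permuting the spheres by $S^2_{g,i}\mapsto S^2_{hg,i}$; under the two isomorphisms above, both of which are $\pi_1$-equivariant once the deck group is identified with $\pi_1(\H_n,\star)$, this action is precisely the action $\trr$ of $\pi_1$ on $\pi_2$. Choosing $A_i$ to be the class of the sphere $S^2_{e,i}$ over the base vertex $e\in F_n$, the generator corresponding to $S^2_{g,i}$ is $g\trr A_i$. Thus $\pi_2(\H_n,\star)$ is free abelian on the family $\{g\trr A_i\}$, and sending $g\trr A_i$ to $(g,K_i)$ defines the asserted isomorphism onto $M_n$; it is $\Z[F_n]$-linear by construction since $h\trr(g\trr A_i)=(hg)\trr A_i$ maps to $(hg,K_i)$. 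Combined with the canonical identification $\pi_1(\H_n,\star)\cong F_n$ established before the lemma, this is exactly an isomorphism of \agg s $\pi_{(1,2)}(\H_n,\star)\cong\M_n$.

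The step I expect to require the most care is the compatibility of conventions: one must check that the deck-transformation action matches the $\pi_1$-action $\trr$ on the same side, i.e.\ as a left action with $h\trr(g\trr A_i)=(hg)\trr A_i$ rather than its opposite, and that the chosen orientations of the characteristic maps defining $A_i$ make $g\trr A_i$ correspond to $(g,K_i)$ on the nose, with no spurious inverse or sign. This is the standard but convention-sensitive identification of $\pi_2(X)$ with $H_2(\widetilde X)$ as $\Z[\pi_1(X)]$-modules; see e.g.\ \cite[Page 342]{Hatcher}.
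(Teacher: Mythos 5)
Your proposal is correct and follows essentially the same route as the paper's proof: pass to the universal cover (the Cayley tree of $F_n$ with a bouquet of $n$ spheres attached over each vertex), collapse the contractible tree to get a wedge of spheres indexed by $F_n\times\{1,\dots,n\}$, and identify the resulting free abelian group with $M_n$ equivariantly. The only cosmetic differences are that you invoke Hurewicz and the wedge axiom for $H_2$ directly where the paper cites Hatcher's computation of $\pi_2$ of a wedge of spheres, and you phrase the module structure via deck transformations where the paper uses the lifting isomorphism together with the monodromy action on the fibre.
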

 
\begin{proof}
  We extend~\cite[Example 4.27]{Hatcher}, which deals with 
case~$n=1$. The crucial ideas of the following argument are also in~\cite[\S 4.5.1]{loopy}.

Let $q\colon (\widehat{\H_n},\hat{\star}) \to (\H_n,\star)$ be the universal cover
of~$\H_n$, where we fixed a~$\hat{{\star}} \in q^{-1}({\star})$. Lifting elements of $\pi_2(\H_n,{\star})$ to 
elements of $\pi_2(\widehat{\H_n},\hat{{\star}})$ yields an isomorphism: 
\[
\Psi  \colon \pi_2(\H_n,{\star}) \to \pi_2(\widehat{\H_n},\hat{\star}) ,
\]
see e.g.~\cite[Proposition~4.1]{Hatcher}. 
Let ${\overline{q}\colon (\widehat{\F_n},\hat{\star})}\to {(\F_n,\star)}$ be the 
universal cover of~$\F_n$, where we fixed~$\hat{{\star}} \in \overline{q}^{-1}({\star})$. The crucial observation, as in~\cite[Example 4.27]{Hatcher}, is that that $\widehat{\H_n}$ is obtained 
from~$\widehat{\F_n}$  by attaching a copy of
$(\J_n,{\star})=\bigvee_{i=1}^n  (S^2_i,{\star})$, along~$\star$, to each element of~$\overline{q}^{-1}({\star})$.

Covering space theory gives a one-to-one correspondence~$\pi_1(\H_n,\star)\to q^{-1}({\star})=\overline{q}^{-1}({\star})$. We choose the 
convention sending $g \in \pi_1(\H_n,\star)$ to~$\hat{{\star}}\triangleleft g$, where on the 
right-hand-side we considered the monodromy right-action of $\pi_1(\H_n,{\star})\cong\pi_1(\F_n,{\star})$ 
on~$q^{-1}({\star})=\overline{q}^{-1}({\star})$.

For each~$g \in \pi_1(\H_n,\star)$, consider a copy of $(\J_{n,g},\star_g)$ of~$(\J_n,\star)$.
We have a pushout diagram:
\[ \xymatrix{
&\displaystyle{\bigsqcup_{g \in \pi_1(\H_n,\star)}} \ar[d]\{\hat{{\star}}\triangleleft g\}\ar[rr] && \ar[d] \displaystyle{ \bigsqcup_{g \in \pi_1(\H_{n},\star)} \J_{n,g}}\\
& \widehat{\F_n}\ar[rr]^{\iota_n}  && \widehat{\H_n}}
\]
in the category of topological spaces. Here the top horizontal arrow
is induced by~$\hat{{\star}}\triangleleft g\mapsto\star_g$.  In
particular, the top horizontal arrow is a cofibration. Consequently
$\iota_n\colon \widehat{\F_n} \to \widehat{\H_n}$ is a cofibration;
see e.g.~\cite[Page 44]{May}. Since $ \widehat{\F_n}$ is contractible,
we hence have a homotopy equivalence~$\widehat{\H_n} \to
\widehat{\H_n}/\widehat{\F_n}$, obtained by
collapsing
$\widehat{\F_n}$ to a point: call it $\star$; see e.g.~\cite[Proposition 0.17]{Hatcher}. 
Now note that  $(\widehat{\H_n}/\widehat{\F_n},\star)$ is canonically homeomorphic to~$\bigvee_{g \in  \pi_1(\H_n,\star)} 
\bigvee_{i=1}^n (S^2_{g,i},\star)$, where~$S^2_{g,i}=S^2$. 
Therefore we have a canonical isomorphism~$\pi_2(\widehat{\H_n},\star)\cong \pi_2\big(\bigvee_{g \in \pi_1(\H_n,\star)} 
\bigvee_{i=1}^n (S^2_{g,i},\star)\big )$.

By~\cite[Example 4.26]{Hatcher}, $\pi_2\big(\bigvee_{g \in \pi_1(\H_n,\star)} 
\bigvee_{i=1}^n (S^2_{g,i},\star)\big )$
 is the free abelian group on the homotopy classes $A_{g,i}$ yielded by the inclusions $(S^2,\star)\stackrel{\cong}{\to }  (S^2_{g,i},\star)\to \bigvee_{g \in  \pi_1(\H_n,\star)
} 
\bigvee_{i=1}^n (S^2_{g,i},\star)$. Therefore we have:
\[
\pi_2(\widehat{\H}_n,\hat{{\star}}) \cong \bigoplus_{g \in{\pi_1(\H_n,\star)}}\,\,\, \bigoplus_{i=1}^n 
\Z(g,K_i)
\]
canonically, where~$\Z(g,K_i) \cong \Z$, and its positive generator is identified with 
\[
A_{g,i}\in \pi_2\big(\bigvee_{g \in \pi_1(\H_n,\star)} 
\bigvee_{i=1}^n (S^2_{g,i},\star)\big )\cong  \pi_2(\widehat{\H_n},\star).
\]
With our conventions, the positive generator of $\Z(g,K_i)$ corresponds to~$\Psi 
(g \trr A_i)\in \pi_2(\widehat{\H_n},\star)$, for each $g \in \pi_1(\H_n,\star)$ and each~$i\in\{1,\dots,n\}$. Given that 
$\Psi\colon \pi_2(\H_n,{\star}) \to \pi_2({\widehat{\H_n}},\hat{{\star}})$ is an 
isomorphism, and that~$\pi_1(\H_n,\star)\cong F_n$, this completes the proof.
\end{proof}

\subsection{Fundamental \texorpdfstring{\agg}{agg} for \texorpdfstring{$(D^3 \setminus L_n)$}{D minus L}}
Recall that $\C=\cup_{i=1}^n C_i$ is an unlinked union of unlinked circles in the interior of~$D^3$. It is now convenient to consider that all circles are confined to the vertical plane~$\{y=1/2\}$. We take each $C_i$ to be a circle centred at~$(i/(n+1),1/2,1/2)$, with radius~$1/2n$, and oriented clockwise, for the point of view of an observer sitting in the $y=0$ plane. Furthermore, we consider a base-point located in the $\{z=0\}$ plane.
Analogously to the 2-dimensional case in Subsection~\ref{2dcase}, we have:
\begin{Proposition}
\label{defretP}
Let $\ast$ be a base point for $D^3 \setminus \C$ contained in~$\d D^3\cap\{z=0\}$. The 
spaces $(D^3 \setminus \C,\ast)$ and  $(\H_n,{\star})$ are pointed homotopic. 
Moreover, we have a deformation retraction from $(D^3 \setminus \C,\ast)$ onto a certain homeomorphic image of~$(\H_n,{\star})$. Here $(\H_n,{\star})$ embeds in $(D^3\setminus \C,\ast)$ in a way 
where each oriented circle $S^1_i$ encircles the oriented circle~$C_i\subset D^3$, forming a 
Hopf link, with positive linking number, and each oriented $S^2_i$ embeds as a positively oriented ``balloon'' (i.e. a 2-sphere, oriented by the exterior normal $\vec{n}_i$) containing~$C_i$, and no other circle, 
see Figure~\ref{def-ret}. 
\end{Proposition}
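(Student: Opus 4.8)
The plan is to mirror the two--dimensional argument of Subsection~\ref{2dcase}: I would construct an explicit embedded copy of $\H_n$ inside $D^3 \setminus \C$ together with a strong deformation retraction onto it, carrying out the work one circle at a time and then assembling. To localise, note that since $\C$ is a trivial link I can choose pairwise disjoint closed round balls $B_1,\dots,B_n$ in the interior of $D^3$ with $C_i \subset \mathring B_i$, arranged in a row matching the chosen configuration in the plane $\{y=1/2\}$. Write $W = D^3 \setminus \bigcup_i \mathring B_i$ for the complementary region. The space $D^3 \setminus \C$ then splits, along the spheres $\d B_i$, into $W$ together with the punctured balls $B_i \setminus C_i$. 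The region $W$ carries the prospective balloons: a radial deformation retraction pushes $W$ onto $\bigcup_i \d B_i$ together with a contractible tree of arcs joining the $\d B_i$ to the base point $\ast \in \d D^3 \cap \{z=0\}$, and I would take this retraction to fix each sphere $\d B_i$ pointwise throughout.

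For the single--circle base case, each $B_i \setminus C_i$ is the complement of an unknotted circle in a ball, which by the standard computation has the homotopy type $S^1 \vee S^2$. Concretely I would produce a strong deformation retraction of $B_i \setminus C_i$ onto an embedded wedge $S^2_i \vee S^1_i$ based at a point $\star_i \in \d B_i$, where $S^2_i = \d B_i$ is the balloon enclosing $C_i$ and $S^1_i$ is a small meridian linking $C_i$ once (for instance by first pushing off the spanning disk of $C_i$ and collapsing, as in the usual treatment of the unknot complement). Crucially, this retraction can again be arranged to fix $\d B_i$ pointwise, so that it agrees on the overlap $\d B_i$ with the retraction of $W$ at all times.

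For the assembly, since the retraction of $W$ and the $n$ retractions of the $B_i \setminus C_i$ all fix the shared spheres $\d B_i$ pointwise for every parameter value, the pasting lemma on the finite closed cover $\{W\}\cup\{B_i \setminus C_i\}$ glues them into a single strong deformation retraction of $D^3 \setminus \C$ onto the embedded complex $\bigcup_i (S^2_i \vee S^1_i)$ joined to $\ast$ by the tree of arcs. Collapsing the contractible tree (or, equivalently, designing the embedding so that all the $S^1_i, S^2_i$ already meet only at $\ast$) exhibits the target as a homeomorphic image of $(\H_n,\star)$, with $\star$ carried to $\ast$. It then remains to check the orientation data: taking the meridians $S^1_i$ counterclockwise and the balloons $S^2_i$ with the exterior normal makes each $S^1_i \cup C_i$ a Hopf link of positive linking number and each $S^2_i$ a positively oriented balloon, as in Figure~\ref{def-ret}. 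Combined with Lemma~\ref{ultimate_freeness}, this yields the pointed homotopy equivalence and the identification $\pi_{(1,2)}(D^3 \setminus \C,\ast)\cong \M_n$.

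I expect the main obstacle to be the base case together with the compatibility it must satisfy: writing down an honest strong deformation retraction of the solid--ball--minus--unknot onto the meridian--wedge--boundary--sphere that \emph{simultaneously} fixes the outer sphere $\d B_i$ pointwise at all times, so that the local retractions glue into a global one. Everything else --- the radial retraction of $W$ onto its spine, the orientation bookkeeping, and the collapse of the tree of arcs --- is routine.
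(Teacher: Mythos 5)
Your argument is sound, but note that the paper does not actually prove Proposition~\ref{defretP}: it is asserted ``analogously to the 2-dimensional case'' with only Figure~\ref{def-ret} as justification, so your cut-and-paste construction is supplying a proof where the paper supplies none. The decomposition of $D^3\setminus\C$ along the spheres $\d B_i$ into $W$ and the punctured balls $B_i\setminus C_i$, with all local retractions fixing the $\d B_i$ pointwise so that they glue, is a legitimate way to do this; the base case you flag as the crux is genuinely standard (identify $B_i\setminus\mathring N(C_i)$ with a solid torus minus an interior open ball, whose boundary sphere corresponds to $\d B_i$ and whose core is a meridian of $C_i$, and retract onto sphere-wedge-core keeping the sphere fixed), or alternatively one can avoid explicit formulas by checking that the inclusion of the spine is a $\pi_1$- and $\pi_2$-isomorphism and invoking the fact that a CW subcomplex whose inclusion is a homotopy equivalence is a deformation retract. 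Two points should be tightened. First, your glued retraction lands on $\bigcup_i(\d B_i\cup S^1_i)$ joined to $\ast$ by a tree of arcs, which is homotopy equivalent but not homeomorphic to $\H_n$; since the proposition claims a deformation retraction onto a \emph{homeomorphic image} of $(\H_n,\star)$, you need either a further deformation retraction absorbing the tree, or (cleaner, and what Figure~\ref{def-ret} depicts) to choose the embedded spine with all $S^1_i$ and $S^2_i$ meeting only at $\ast$ from the outset and apply the CW-pair criterion to that inclusion directly; tangent round balls cannot realise this, so the literal statement is not obtained by your construction as written. Second, the claim that a ``radial'' retraction of $W$ produces the tree of arcs while fixing every $\d B_i$ pointwise is glib as stated, but is repaired by the same abstract criterion applied to the pair $(W,\bigcup_i\d B_i\cup T)$.
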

\begin{figure}[ht!]
 \labellist
\pinlabel ${S^2_i}$ at 315 35
\pinlabel ${S^1_i}$ at 243 70
\pinlabel ${C_i}$ at 307 171
\pinlabel ${\ast}$ at 180 -13
\pinlabel ${{\vec{n}_i}}$ at 329 290
\pinlabel ${{x}}$ at 626 130
\pinlabel ${{y}}$ at 617 230
\pinlabel ${{z}}$ at 517 279
\endlabellist
\centering
\includegraphics[scale=0.24]{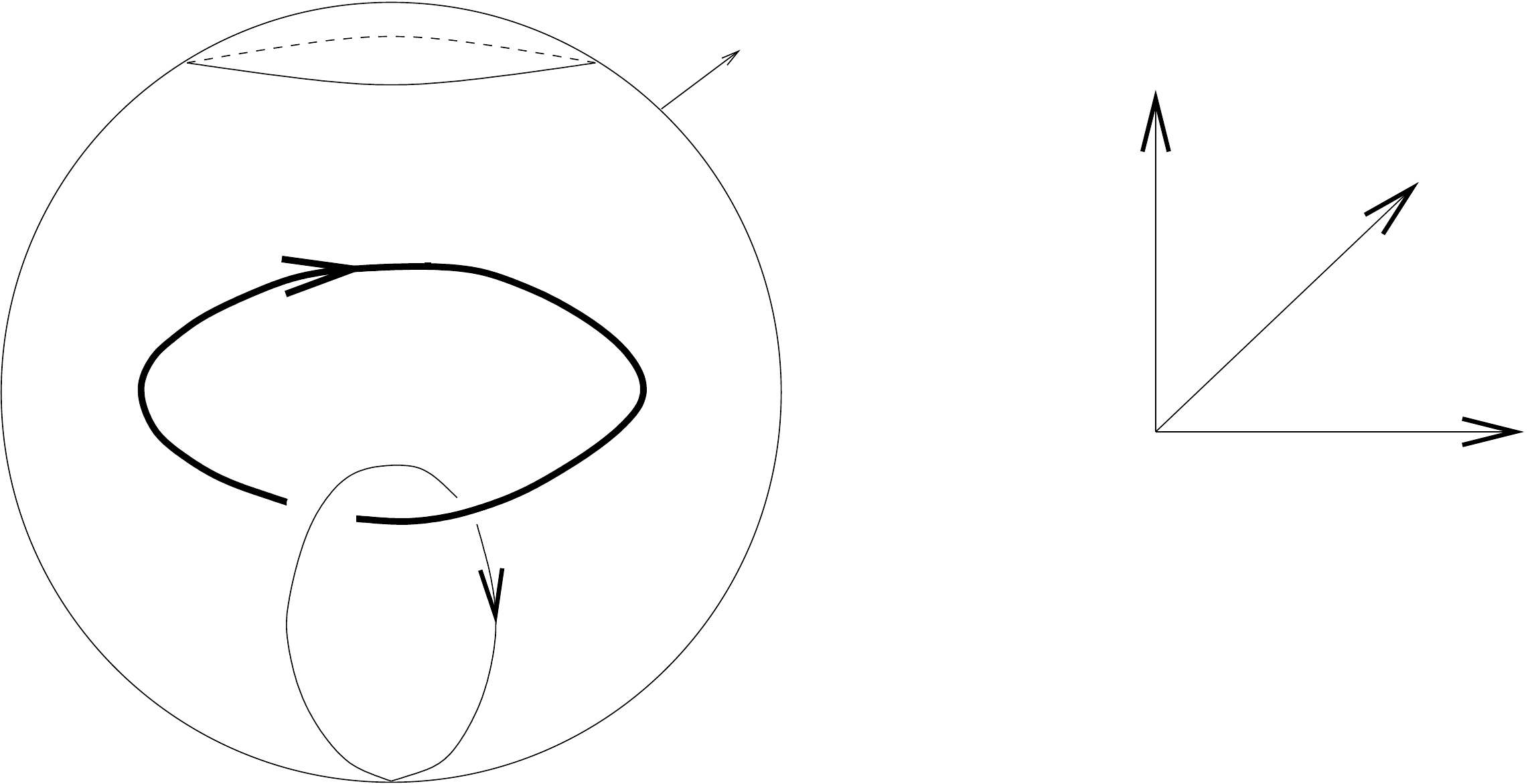}
\caption{Embedding of $(S^1_i,\star) \vee (S^2_i,\star)$ inside~$(D^3 \setminus 
\C,\ast)$.} 
\label{def-ret} 
\end{figure}

From the previous lemma, it follows that  
the inclusion $(\H_n,\star)$ inside  $\pi_2(D^3 \setminus \C,\ast)$ induces an isomorphism of \agg s~$\pi_{(1,2)}(\H_n,\star)  \cong  \pi_{(1,2)}(D^3\setminus \C,\ast)$. Hence $ \pi_{(1,2)}(D^3\setminus \C,\ast)\cong \M_n.$

Let $f$ be a self-homeomorphism of the pair~$(D^3, \C)$, relative to the boundary. 
Isotopies of such homeomorphisms are also considered to be relative to the boundary. 
Hence each element  $[f] \in \MCG(D^3,\C)=\LBE\nn$ yields, by restricting to~$D^3\setminus \C$, a pointed homotopy class of pointed maps $\K([f])\colon (D^3 \setminus \C,\ast) \to (D^3 \setminus \C,\ast)$; here recall that we imposed~$\ast\in \d D^3$. 
The induced map  of fundamental \agg s:
\[
\pi_{(1,2)}\big (\K([f])\big) \colon \pi_{(1,2)}(D^3 \setminus \C,\ast) \cong\M_n\longrightarrow \pi_{(1,2)}(D^3 \setminus \C,\ast)\cong \M_n
\]
is denoted $\Theta'([f])\colon \M_n \longrightarrow \M_n$. {The functoriality of $\pi_{(1,2)}\colon \TOP_\ast /_{\cong} \to \aagg$ gives that $\Theta'([f][g])=\Theta'([f\circ g])=\Theta'([f])\circ \Theta'([g])$, for each $[f],[g]\in \MCG(D^3,\C)$.}
We thus have a group homomorphism~$\Theta'\colon \LBE\nn =\MCG(D^3,\C) \to \Aut(\M_n)$. The following theorem gives the topological interpretation of the~{\TAR.}

\begin{Theorem}
\label{main2} 
The homomorphism $\Theta'\colon \LBE\nn\to \Aut(\M_n)$ coincides with the homomorphism $\Pn\colon \LBE\nn\to \Aut(\M_n)$ in Theorem~\ref{main1}. 
In particular $\Theta'$ is injective.
\end{Theorem}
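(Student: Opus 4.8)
The plan is to show that $\Theta'$ and $\Pn$ agree on a generating set of $\LBE\nn$ and then conclude by the rigidity of morphisms out of a \agg\ of free type. Both maps are group homomorphisms $\LBE\nn\to\Aut(\M_n)$ --- $\Pn$ by Theorem~\ref{main1}, and $\Theta'$ by the functoriality of $\pi_{(1,2)}$ recorded just before the statement. Since Theorem~\ref{gensandrelsforLBG} presents $\LBE\nn$ on the mapping classes $\Sigma_i,\Rho_i$ ($i=1,\dots,n-1$) and $\Tau_j$ ($j=1,\dots,n$), it suffices to establish $\Theta'(\Sigma_i)=\mathcal{S}_i$, $\Theta'(\Rho_i)=\mathcal{R}_i$ and $\Theta'(\Tau_j)=\mathcal{T}_j$. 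By Proposition~\ref{extfromgens} each such identity is an equality of \agg\ morphisms that may be tested on the generators $x_1,\dots,x_n$ and $K_1,\dots,K_n$ alone, so I would split every case into its effect on the $x_i$ (the $\pi_1$-part) and its effect on the $K_i$ (the $\pi_2$-part).

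The $\pi_1$-part requires no new work. Let $F^1\colon\Aut(\M_n)\to\Aut(F_n)$, $(f^1,f^2)\mapsto f^1$, be the forgetful map used in the proof of Theorem~\ref{main1}. By construction the $\pi_1$-component of $\pi_{(1,2)}(\K([f]))$ is the automorphism induced by $\K([f])$ on $\pi_1(D^3\setminus\C,\ast)\cong F_n$, that is the map $\theta'([f])$ of~\eqref{deftheta'}, which is Dahm's homomorphism $\pn$ of Theorem~\ref{gold-theorem}. Hence $F^1\circ\Theta'=\pn=F^1\circ\Pn$, and the $\pi_1$-parts of $\Theta'(\Sigma_i)$, $\Theta'(\Rho_i)$, $\Theta'(\Tau_j)$ are exactly $\mathcal{S}_i^1$, $\mathcal{R}_i^1$, $\mathcal{T}_j^1$, as required.

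The geometric content is the $\pi_2$-part, namely the effect of each generator on the balloons $K_i$ under the identification $\pi_{(1,2)}(D^3\setminus\C,\ast)\cong\M_n$ fixed by Proposition~\ref{defretP}. Each generator has a representative supported in a sub-ball of $D^3$ meeting only the circle(s) it moves, so it fixes the balloons $K_j$ (and connecting paths $\gamma_j$) for all untouched indices; this localises every computation to one or two circles. For $\Rho_i$ the representative interchanges $C_i$ and $C_{i+1}$, hence swaps the balloons, giving $K_i\leftrightarrow K_{i+1}$; for $\Tau_j$ the $180^\circ$ flip is an orientation-preserving self-homeomorphism of $D^3$ taking the balloon $b_j$ to itself with exterior normal preserved, so $K_j\mapsto K_j$ (this does not make $\mathcal{T}_j^2$ the identity, since the $\Z[F_n]$-action is twisted through $\mathcal{T}_j^1$). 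For $\Sigma_i$ the local computation is exactly Lemma~\ref{L:Two_circles} applied to the pair $C_i,C_{i+1}$: the balloon around the over-circle is dragged so as to wrap the other circle, yielding $K_{i+1}\mapsto x_{i+1}^{-1}\trr K_i$; the image of $K_i$ is then forced by the conservation law of Remark~\ref{conservation_flux} --- that $K_1+\dots+K_n$ is represented by a parametrisation $(D^2,\d D^2)\to(\d D^3,\ast)$ and so is fixed by every class that is the identity on $\d D^3$ --- to be $K_i+K_{i+1}-x_{i+1}^{-1}\trr K_i$. These agree with $\mathcal{R}_i^2$, $\mathcal{T}_j^2$ and $\mathcal{S}_i^2$.

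Having matched both components on all generators, Proposition~\ref{extfromgens} yields $\Theta'(\Sigma_i)=\mathcal{S}_i$ and likewise for $\Rho_i,\Tau_j$, whence $\Theta'=\Pn$; injectivity of $\Theta'$ then follows from that of $\Pn$ in Theorem~\ref{main1} (equivalently from the injectivity of $F^1\circ\Theta'=\pn$). The main obstacle I anticipate is the $\Sigma_i$ case: making precise that the braiding can be localised and that the dragged balloon acquires precisely the monodromy $x_{i+1}^{-1}$, and reconciling the row configuration of Proposition~\ref{defretP} with the coaxial picture underlying Lemma~\ref{L:Two_circles}. The two pictures are related by a boundary-fixing isotopy of $D^3$ inducing an identification that carries $x_i\mapsto x_i$ and $K_i\mapsto K_i$, so the local formula transports; verifying this compatibility carefully is where the real care lies.
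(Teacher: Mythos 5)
Your proposal is correct and follows essentially the same route as the paper's own (sketch) proof: both reduce to checking the generators $\Sigma_i,\Rho_i,\Tau_j$, identify the $\pi_1$-component with Dahm's homomorphism, and compute the $\pi_2$-component geometrically, pinning down $\Sigma_i(K_i)$ via invariance of a sum of balloon classes (the paper uses the local balloon $K_{i,i+1}$ enclosing $C_i\cup C_{i+1}$, you use the boundary sphere representing $K_1+\dots+K_n$ together with locality of the support --- an inessential variation). Your closing caveat about reconciling the coaxial and row configurations is exactly the kind of care the paper also leaves at sketch level.
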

The main ideas of the following proof are also in~\cite[\S 4.5.1]{loopy}.
\begin{proof}\textbf{{(Sketch)}}
We recall what each generator $g$ of the extended loop braid group in Theorem \ref{gensandrelsforLBG} incarnates geometrically. Let $\mathrm{Homeo}(D^3)$ be the group of homeomorphisms~$D^3\to D^3$, relative to the boundary. Consider an ambient isotopy $t \in [0,1]\mapsto \phi_t^g    {\in \mathrm{Homeo}(D^3)}$ of~$D^3$, 
for each generator $g$ of~$\LBE\nn$, as in Figure~\ref{corr}. Concretely $t \mapsto \phi_t^g$ is obtained by applying the isotopy extension theorem, as in~\cite[Chapter 8. 1.3 Theorem]{Hirsch}, to the smooth isotopies outlined in Figure~\ref{corr}.
Each {ambient} isotopy $t \mapsto \phi_t^g$ is relative to the boundary of~$D^3$, and satisfies~$\phi_0^g=\id_{D^3}$. Moreover, note that $\phi^g_1$ is a homeomorphism~$(D^3,\C) \to (D^3,\C)$.
The elements of $\MCG(D^3,\C)$ corresponding to each of the generators $g$ of $\LBE\nn$  in Theorem~\ref{gensandrelsforLBG} are obtained by evaluating the ambient isotopies  $\phi^g_t$  at~$t=1$. Our motions are the ones in~\cite[\S 3]{Damiani}, with the interval $[0, 1]$ taken with reversed extremes 
due to different conventions for the product in~$\MCG(D^3,\C)$.

\begin{figure}[hbt]
\centering
\includegraphics[scale=0.6]{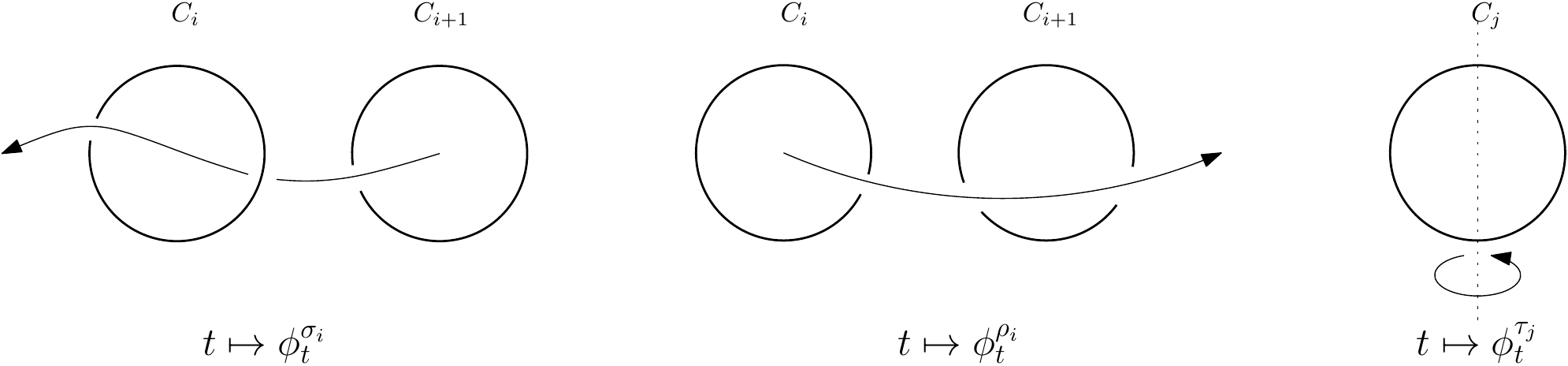}
\caption{Our conventions for the elements~$\Sigma_i$, $\Rho_i$ and $\Tau_j$ in~$\MCG(D^3,\C)$, obtained by evaluating (respectively) the shown isotopies at~$t=1$.}
\label{corr}
\end{figure}

Recall~$\pi_{(1,2)}(D^3,\C)\cong \M_n=(F_n,M_n,\trr)$,
where $M_n$ is the free $\Z[F_n]$-module generated by~$\{K_1, \ldots, K_\nn\}$; see Definition~\ref{D:M_n}.
Our conventions for the generators $x_j \in \pi_1(D^3 \setminus
\C,\ast)$ and~$K_j \in \pi_2(D^3 \setminus \C,\ast)$, where~$j=1,\dots,n$, are depicted on the left-hand side of
Figure~\ref{gens}. In particular, $K_j$ is obtained from a positively
  oriented parametrisation $(D^2,\d D^2) \to (D^3 \setminus
  \C,\ast_j)$ of a ``balloon'' (i.e. a $2$-sphere, oriented by an
  exterior normal), based in~$\ast_j$, as shown, containing the circle
  $C_j$ only; acted on by the path~$\gamma_j$, {a straight line}
  connecting $\ast$ and~$\ast_j$, in order to obtain an element of~$\pi_2(D^3\setminus \C,\ast)$. This point of view is as in~\cite{Ballons}, \cite[\S 4.5.1]{loopy} and~\cite[\S2.1.3]{martins_2009}.
Also, $x_j$ is given by the loop in Figure~\ref{gens}, which is then
conjugated by the path~$\gamma_j$, in order to yield an element of~$\pi_1(D^3\setminus \C,\ast)$.
By applying the deformation retration in Proposition~\ref{defretP}, we hence obtain the generators of $\pi_{(1,2)}(\H_n,\star)$ as in Lemma~\ref{ultimate_freeness}.
\begin{figure}[ht!]
 \labellist
 \pinlabel ${x_j}$ at 245 255
\pinlabel ${C_j}$ at 132 430
\pinlabel ${K_j}$ at 370 360
 \pinlabel ${x_{j+1}}$ at 772 255
\pinlabel ${C_{j+1}}$ at 665 430
\pinlabel ${K_{j+1}}$ at 909 360
\pinlabel ${\ast}$ at 342 -16
\pinlabel ${\ast}$ at 1530 -16
\pinlabel ${\ast_j}$ at 165 145
\pinlabel ${\ast_{j+1}}$ at 712 145
\pinlabel ${\gamma_j}$ at 286 83
\pinlabel ${\gamma_{j+1}}$ at 601 83
\pinlabel ${C_i}$ at 1295 430
\pinlabel ${C_{i+1}}$ at 1732 430
\endlabellist
\centering
\includegraphics[scale=0.2]{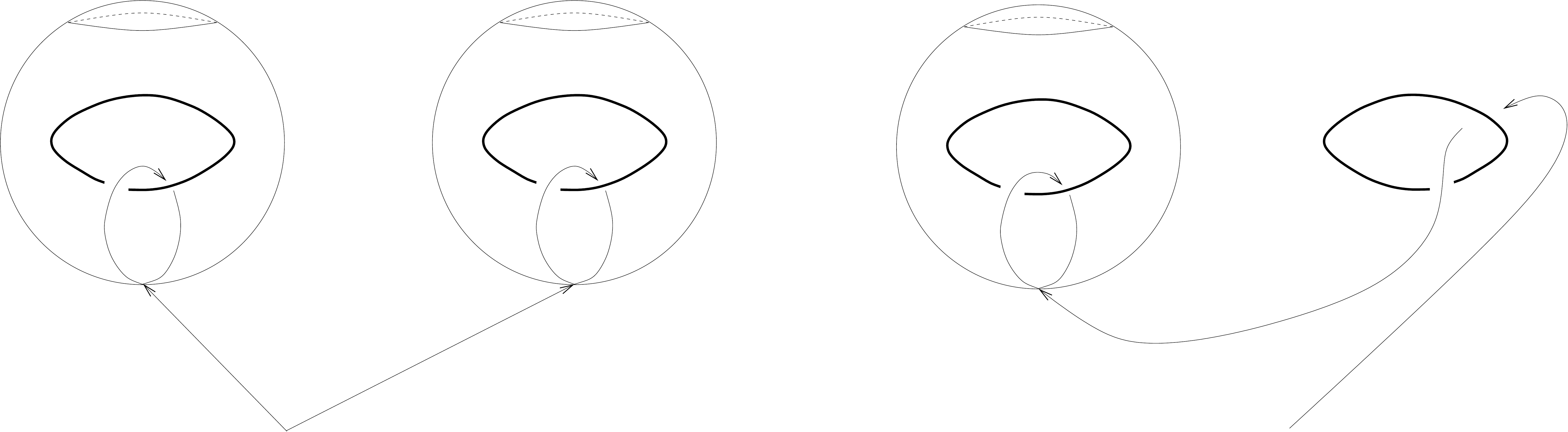}
\caption{\label{gens}
  On the left of the figure: our conventions for the generators~$x_j\in \pi_1(D^3 \setminus \C,\ast)$ and $K_j\in \pi_2(D^3
  \setminus \C,\ast)$, in the vicinity of the circle~$C_j$.   On the right of the figure:
  the result of acting with $\Sigma_i$ on $x_{i+1}$ and on~$K_{i+1}$.}
\end{figure}

Let us see how the generators of $\MCG(D^3,\C)$ in Theorem \ref{gensandrelsforLBG} act on the $x_i$s and the~$K_j$s. 
We use the same notation to denote $g\in \LBE\nn$ and the map~$\Theta'(g)\colon \M_n \to \M_n$.

Whenever~$|i-j|>1$, nothing happens when we apply $\Sigma_i$ and $\Rho_i$ to $x_j$ and to~$K_j$. Also, $\Rho_i$ is such that:
\begin{align*}
\Rho_i \colon &&& x_i \mapsto x_{i+1}, & x_{i+1} \mapsto x_i, &  & K_i \mapsto K_{i+1}, && K_{i+1}\mapsto K_i.
\end{align*}
Hence $\Rho_i$ coincides with $\mathcal{R}_i$ in~\eqref{defofR1}
and~\eqref{defofR2}.

If we apply $\Sigma_i$ to $x_i$ we get~$x_{i+1}$.  The right-hand-side of Figure~\ref{gens} indicates what happens to $x_{i+1}$ and $K_{i+1}$ when we apply~$\Sigma_i$. Hence:
\begin{align*}\Sigma_i \colon &&& x_{i+1} \mapsto x_{i+1}^{-1}\, x_i \, x_{i+1}, & K_{i+1} \mapsto x_{i+1}^{-1} \trr {K_{i}. }
\end{align*} 

Let us now determine~$\Sigma_i(K_{i+1})$. Note that when the circle
$C_{i+1}$ goes inside the circle $C_{i}$ it ``drags'' the balloon
representing the class~$K_i\in \pi_2(D^3\setminus \C,\ast)$.
However~$\Sigma_i(K_{i}+K_{i+1} )=K_{i} + K_{i+1}$. This is because $K_i+K_{i+1}\in \pi_2(D^3\setminus \C,\ast)$ can be seen as being represented by a bigger balloon $K_{i,i+1}$ containing $C_i \cup C_{i+1}$, see Figure~\ref{balloon}. Moreover,  the isotopy $t\in [0,1] \mapsto \phi_t^{\sigma_i}$ from which we define $\Sigma_i=\phi_1^{\sigma_i}$ can be chosen to happen inside the bigger balloon~$K_{i,i+1}$, hence not moving $K_{i,i+1}$.
\begin{figure}[ht!]
 \labellist
\pinlabel ${C_i}$ at 166 313
\pinlabel ${C_{i+1}}$ at 463 313 
\pinlabel ${K_{i,i+1}}$ at 77 56
\pinlabel ${\ast}$ at 596 -12
\endlabellist
\centering
\includegraphics[scale=0.2]{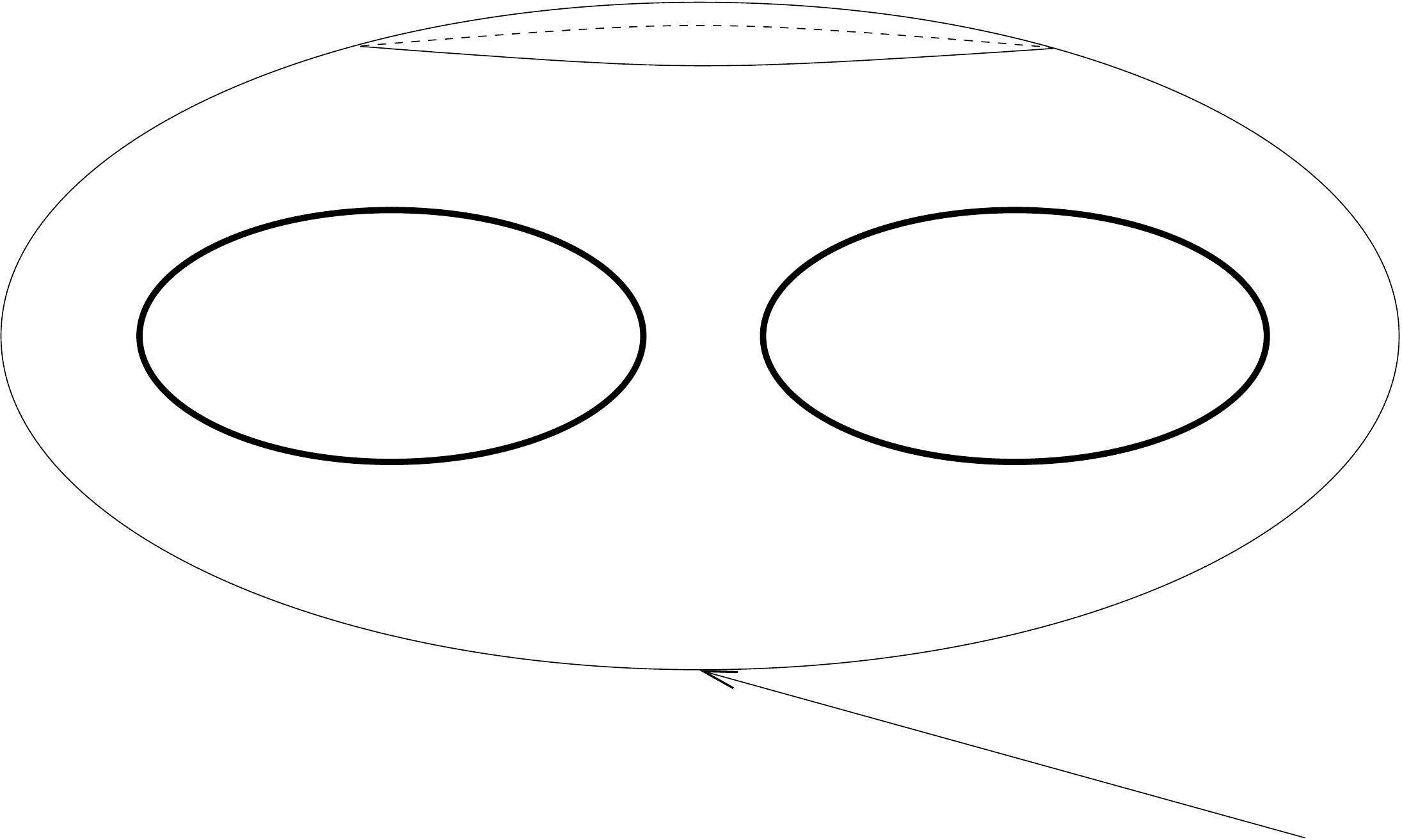}
\caption{\label{balloon} The balloon  $K_{i,i+1}$  representing $K_i+K_{i+1}\in \pi_2(D^3 \setminus \C,\ast)$.}
\end{figure}

We obtain~$\Sigma_i(K_{i,i+1})=K_{i,i+1}$. Therefore: 
\[
\Sigma_i(K_i)+\Sigma_i(K_{i+1})=\Sigma_i(K_{i}+K_{i+1} )=K_{i} + K_{i+1}, 
\]
and
\[
\Sigma_i (K_i)=K_{i} + K_{i+1}-\Sigma_i(K_{i+1})=K_{i} + K_{i+1}-x_{i+1}^{-1}\trr K_{i+1}. 
\]
Hence $\Sigma_i$ coincides with $\mathcal{S}_i$ in~\eqref{defofS1} and~\eqref{defofS2}. 
That $\Tau_i$s coincide with $\mathcal{T}_i$s in~\eqref{defofT1} can be seen in a similar way. Note that~$\Tau_i(K_i)=K_i$, since the isotopy yielding $K_i$ can be chosen to happen inside the balloon representing~$K_i$.
\end{proof}

\begin{Remark}
  The restriction of the 
  representation
  $\Pn\colon \LB\nn^{\mathrm{ext} }\to \Aut(\M_n)$ to the loop braid
  group~$\LB\nn$, as well as its topological interpretation,
  are also discussed  
  in~\cite[\S 4.5.1]{loopy}. 
\end{Remark}

\begin{Remark}\label{conservation_flux_reasons}
In Remark~\ref{conservation_flux}, we promised an interpretation of the property~$\Pn(g)^2(K_1+\dots + K_n)=K_1+\dots + K_n$, for each~$g \in \MCG(D^3,\C)$.
Observe that, given our conventions, $M=K_1+\dots + K_n$  is homotopic to the element of~$\pi_2(D^3 \setminus \C,\ast)$ yielded by the inclusion of $(\d D^3,\ast)$ inside~$(D^3 \setminus \C,\ast)$. Since all elements of $\MCG(D^3,\C)$ restrict to the the identity over~$\d D^3$, it follows that~$\Pn(g)^2(M)=M$, for all~$g \in  \MCG(D^3,\C)=\LBE\nn$. 
\end{Remark}

\subsection{Remarks on connections to Baues' combinatorial homotopy}\label{remarks}
Given a pointed space $(X,\ast)$, we denote by $\E(X,\ast)$ the group of
pointed homotopy equivalences~$(X,\ast)\to (X,\ast)$, up to pointed
homotopy.
If~$\ast\in\d D^3$,
we have a group homomorphism~$\K\colon \LBE\nn \to \E(D^3 \setminus \C,\ast)$,
obtained by restricting $f \colon (D^3,  \C) \to (D^3, \C)$ to $D^3
\setminus \C$. 
The fundamental \agg\ functor ${\TOP_\ast}/_{\cong} \to \aagg$ gives another group homomorphism
$\E(D^3 \setminus \C,\ast)
\to \Aut(\pi_{(1,2)}(D^3 \setminus \C,\ast))\cong\Aut(\M_n)$. Composing with~$\K$,
this yields a group homomorphism~$\Theta'\colon \LB\nn \to \Aut(\M_n)$,
which by Theorem~\ref{main2}, coincides with~$\Pn\colon \LB\nn \to \Aut(\M_n) $.

Note that
taking into account our results,
it follows from elementary algebraic topological techniques,
or as a particular case of~\cite[ Theorem III(7.1)]{Baues4D} or \cite[ Corollary VI(3.5)]{BauesAH},
that  $\E(D^3 \setminus \C,\ast)\cong \E(\H_n,\star)\cong \Aut(\M_n)$.
This is a generalisation of the well known fact that~$\E(D^2 \setminus d_n,\ast) \cong \E(\F_n,*)\cong \Aut(F_n)$.
Hence in this $(D^3 \setminus \C,\ast)$ case, \agg s
retain all of the homotopy theoretical information necessary to combinatorially model the group~$\E(D^3 \setminus \C,\ast)$.
Therefore, the lifted Artin representation $\Theta\colon \MCG(D^3,\C) \to \Aut(\M_n)$ completely models the pointed homotopy type of the restriction of a mapping class in $\MCG(D^3,\C) $ to~$D^3 \setminus \C.$

\appendix
 
\section{Calculations}
\label{A:calculations}
In this Appendix we explicitly verify that relations from Theorem~\ref{main1} hold. It will be enough to consider the case~$\nn=3$. In order to reduce indices, with respect to Theorem~\ref{main1} we replace $\{x_1,x_2,x_3\}$ with $\{x,y,z\}$ and $\{K_1,K_2,K_3\}$ with $\{K,L,M\}$.
We will only show computation for generators $\{K,L,M\}$, since on $\{x,y,z\}$ our representation coincides with Dahm's homomorphism. 
\begin{itemize}
\item $\mathcal{S}_1 \circ \mathcal{S}_2 \circ \mathcal{S}_1=\mathcal{S}_2 \circ \mathcal{S}_1 \circ \mathcal{S}_2$.

Applying $\mathcal{S}_1 \circ \mathcal{S}_2 \circ \mathcal{S}_1$ one gets
\[
\left\{\begin{array}{llll}
&K \longmapsto &K+L-y^{-1} \trr K  \longmapsto K+ L+M-z^{-1} \trr L-z^{-1} \trr K  \longmapsto\\
&& K+L-y^{-1} \trr K + y^{-1} \trr K+M-z^{-1} \trr y^{-1} \trr K -z^{-1} \trr (K+L-y^{-1} \trr K) \\
&&  = K+L+M -z^{-1} \trr (K+L) , \\
&L   \longmapsto &  y^{-1} \trr K  \longmapsto z^{-1} \trr K \longmapsto  z^{-1}\trr (K+L-y^{-1} \trr K), \\
&M  \longmapsto & M \longmapsto z^{-1} \trr L \longmapsto z^{-1} \trr y^{-1} \trr K 
\end{array}\right.
\]
\noindent while applying $\mathcal{S}_2 \circ \mathcal{S}_1 \circ \mathcal{S}_2$ one gets
\[
\left\{\begin{array}{llll}
&K \longmapsto &K \longmapsto K+L-y^{-1} \trr K \longmapsto K+L+M-z^{-1} \trr L-z^{-1} \trr K, \\
&L   \longmapsto &L+M-z^{-1} \trr L \longmapsto  y^{-1} \trr K+M-z^{-1} \trr y^{-1} \trr K \\
&& \longmapsto z^{-1} \trr K+z^{-1} \trr L-(z^{-1}y^{-1}z) \trr z^{-1} \trr K \\
&M  \longmapsto & z^{-1} \trr L \mapsto z^{-1} \trr y^{-1}\trr K \mapsto (z^{-1}y^{-1}z) \trr z^{-1}\trr K.
\end{array} \right.
\]

\item $\mathcal{R}_2 \circ \mathcal{R}_1 \circ \mathcal{S}_2=\mathcal{S}_1 \circ \mathcal{R}_2 \circ \mathcal{R}_1$
Applying $\mathcal{R}_2 \circ \mathcal{R}_1 \circ \mathcal{S}_2$ one gets

\[
\left\{\begin{array}{llll}
&K \longmapsto & K \longmapsto L \longmapsto M \\
&L   \longmapsto &  L+M-z^{-1}\trr L \longmapsto K+M-z^{-1}\trr K \longmapsto K+L-y^{-1}\trr K \\
&M  \longmapsto & z^{-1} \trr L \mapsto z^{-1} \trr K \mapsto y^{-1} \trr K 
\end{array}\right.
\]
\noindent while applying $\mathcal{S}_1 \circ \mathcal{R}_2 \circ \mathcal{R}_1$ one gets
\[
\left\{\begin{array}{llll}
&K   \longmapsto &  L \longmapsto M \longmapsto M \\
&L   \longmapsto &  K \longmapsto K \longmapsto K+L-y^{-1} \trr K\\
&M   \longmapsto &  M \longmapsto L \longmapsto y^{-1} \trr K.
\end{array}\right.
\]

\item $\mathcal{S}_2\circ \mathcal{S}_1 \circ \mathcal{R}_2 =\mathcal{R}_1 \circ \mathcal{S}_2 \circ \mathcal{S}_1$ 

Applying $\mathcal{S}_2\circ \mathcal{S}_1 \circ \mathcal{R}_2 $ one gets

\[
\left\{\begin{array}{llll}
&K \longmapsto & K \longmapsto  K+L-y^{-1} \trr K \longmapsto K+L+M-z^{-1} \trr L-z^{-1} \trr K \\
&L   \longmapsto & M \longmapsto M \longmapsto z^{-1} \trr L \\
&M  \longmapsto &  L \longmapsto y^{-1} \trr K \longmapsto z^{-1}\trr K 
\end{array}\right.
\]
\noindent while applying $\mathcal{R}_1 \circ \mathcal{S}_2 \circ \mathcal{S}_1$ one gets
\[
\left\{\begin{array}{llll}
&K \longmapsto & K+L-y^{-1} \trr K \longmapsto K+L+M-z^{-1} \trr L-z^{-1} \trr K \\ 
&& \longmapsto L+K+M-z^{-1} \trr K-z^{-1} \trr L \\
&L   \longmapsto & y^{-1} \trr K \longmapsto z^{-1} \trr K \longmapsto z^{-1} \trr L \\
&M  \longmapsto &  M \longmapsto z^{-1} \trr L \longmapsto z^{-1}\trr K .
\end{array}\right.
\]

\item $\mathcal{T}_1 \circ \mathcal{R}_1=\mathcal{R}_1 \circ \mathcal{T}_2$
Applying $\mathcal{T}_1 \circ \mathcal{R}_1$ one gets 

\[
\left\{\begin{array}{llll}
&K \longmapsto & L \longmapsto L \\ 
&L   \longmapsto & K \longmapsto K \\
&M  \longmapsto &  M \longmapsto M
\end{array}\right.
\]

\noindent while applying $\mathcal{R}_1 \circ \mathcal{T}_2$ one gets

\[
\left\{\begin{array}{llll}
&K \longmapsto & K \longmapsto L \\ 
&L   \longmapsto & L \longmapsto K \\
&M  \longmapsto &  M \longmapsto M.
\end{array}\right.
\]

\item $\mathcal{T}_1 \circ \mathcal{S}_1 = \mathcal{S}_1 \circ \mathcal{T}_2$

Applying $\mathcal{T}_1 \circ \mathcal{S}_1$ one gets 

\[
\left\{\begin{array}{llll}
&K \longmapsto &K+L-y^{-1} \trr K \longmapsto K+L+ y^{-1} \trr K \\ 
&L   \longmapsto & y^{-1} \trr K \longmapsto y^{-1} \trr K \\
&M  \longmapsto &  M \longmapsto M
\end{array}\right.
\]

\noindent while applying $\mathcal{S}_1 \circ \mathcal{T}_2$ one gets

\[
\left\{\begin{array}{llll}
&K \longmapsto & K+L-y^{-1} \trr K \longmapsto K+L+ y^{-1} \trr K \\ 
&L   \longmapsto & y^{-1} \trr K \longmapsto y^{-1} \trr K \\
&M  \longmapsto &  M \longmapsto M.
\end{array}\right.
\]

\item $\mathcal{T}_2 \circ \mathcal{S}_1 = \mathcal{R}_1 \circ \mathcal{S}_1^{-1}\circ \mathcal{R}_1 \circ \mathcal{T}_1$

Applying $\mathcal{T}_2 \circ \mathcal{S}_1 $ one gets 

\[
\left\{\begin{array}{llll}
&K \longmapsto & K+L-y^{-1} \trr K \longmapsto K+L-y \trr K \\ 
&L   \longmapsto & y^{-1} \trr K \mapsto y \trr K\\
&M  \longmapsto &  M \longmapsto M
\end{array}\right.
\]

\noindent while applying $\mathcal{R}_1 \circ \mathcal{S}_1^{-1}\circ \mathcal{R}_1 \circ \mathcal{T}_1$ one gets

\[
\left\{\begin{array}{llll}
&K \longmapsto &   K \longmapsto L \longmapsto  K+L-x\trr L \longmapsto  L+K-y\trr K \\ 
&L   \longmapsto & L \longmapsto K \longmapsto x \trr L \longmapsto y \trr K \\
&M  \longmapsto &  M \longmapsto M \longmapsto M \longmapsto M.
\end{array}\right.
\]

\end{itemize}


\bibliographystyle{alpha}
\bibliography{2-sorted}

\end{document}